\newcommand{\dsp}{\displaystyle}
\newcommand{\eps}{\varepsilon}
\newcommand{\Om}{\Omega}
\newcommand{\mrm}[1]{\mathrm{#1}}
\newcommand{\Cplx}{\mathbb{C}}
\newcommand{\N}{\mathbb{N}}
\newcommand{\R}{\mathbb{R}}
\newcommand{\Z}{\mathbb{Z}}
\newcommand{\mL}{\mrm{L}}
\newcommand{\mH}{\mrm{H}}
\newcommand{\rcoef}{\mathcal{R}}
\newcommand{\tcoef}{\mathcal{T}}
\newcommand{\tr}{\mrm{tr}}
\newtheorem{theorem}{Theorem}[section]
\newtheorem{lemma}{Lemma}[section]
\newtheorem{remark}{Remark}[section]
\newtheorem{proposition}{Proposition}[section]
\begin{document}

~\vspace{0.0cm}
\begin{center}
{\sc \bf\LARGE  Non reflection and perfect reflection via\\[6pt] Fano resonance in waveguides}
\end{center}

\begin{center}
\textsc{Lucas Chesnel}$^1$, \textsc{Sergei A. Nazarov}$^{2,\,3}$\\[16pt]
\begin{minipage}{0.95\textwidth}
{\small
$^1$ INRIA/Centre de mathématiques appliquées, \'Ecole Polytechnique, Université Paris-Saclay, Route de Saclay, 91128 Palaiseau, France;\\
$^2$ St. Petersburg State University, Universitetskaya naberezhnaya, 7-9, 199034, St. Petersburg, Russia;\\
$^3$ Institute of Problems of Mechanical Engineering, Bolshoy prospekt, 61, 199178, V.O., St. Petersburg, Russia.\\[10pt]
E-mails: \texttt{lucas.chesnel@inria.fr}, \texttt{srgnazarov@yahoo.co.uk}\\[-14pt]
\begin{center}
(\today)
\end{center}
}
\end{minipage}
\end{center}
\vspace{0.4cm}

\noindent\textbf{Abstract.} 
We investigate a time-harmonic wave problem in a waveguide. By means of  asymptotic analysis techniques, we justify the so-called Fano resonance phenomenon. More precisely, we show that the scattering matrix considered as a function of a geometrical parameter $\eps$ and of the frequency $\lambda$ is in general not continuous at a point $(\eps,\lambda)=(0,\lambda^0)$ where trapped modes exist. In particular, we prove that for a given $\eps\ne0$ small, the scattering matrix exhibits a rapid change for frequencies varying in a neighbourhood of $\lambda^0$. We use this property to construct examples of waveguides such that the energy of an incident wave propagating through the structure is perfectly transmitted (non reflection) or perfectly reflected in monomode regime. We provide numerical results to illustrate our theorems.\\

\noindent\textbf{Key words.} Waveguides, Fano resonance, non reflection, perfect reflection, scattering matrix.

\section{Introduction}\label{Introduction}

We consider a time-harmonic wave problem in a waveguide unbounded in one direction with a non-penetration (Neumann) boundary condition. This problem appears naturally in many fields, for instance in acoustics, in water-waves theory or in electromagnetism. Our original motivation for the present work was to design waveguides such that the energy of an incident wave propagating through the structure is perfectly transmitted or perfectly reflected. This activity has been the topic of intense studies in the recent years in physics. In particular, we refer the reader to the literature concerning so-called Perfect Transmission Resonances (PTRs), see e.g. \cite{Shao94,PoGP99,LeKi01,Zhuk10,MrMK11}. \\
\newline
At low frequency, that is for $0<k<\pi$ in our particular geometry below, only the two piston modes can propagate and non reflection as well as perfect reflection can be characterized very simply. In such a configuration, in order to describe the scattering process of an incident wave propagating in the waveguide, one usually introduces the reflection coefficient $\rcoef$ and the transmission coefficient $\tcoef$. These complex numbers correspond to the far field amplitudes of the reflected field in the input lead and of the total field in the output lead. Due to conservation of energy, we have the relation
\begin{equation}\label{EnergyConservation}
|\rcoef|^2+|\tcoef|^2=1.
\end{equation} 
In this context, at a given frequency, we say that the geometry is non reflecting if $\rcoef=0$. In this case, the energy of the incident field is perfectly transmitted and the reflected field is exponentially decaying in the input lead. On the other hand, we say that the geometry is perfectly reflecting if $\tcoef=0$. In this situation, the energy is completely backscattered. Though the physical literature concerning Perfect Transmission Resonances is luxuriant, there are only few rigorous mathematical proofs of  existence of waveguides where $\rcoef=0$ or $\tcoef=0$. The goal of the present work is to propose an approach based on the use of the so-called Fano resonance to get these particular values for the scattering coefficients.\\
\newline 
The Fano resonance is a classical phenomenon that arises in many situations in physics (see \cite{Fano61} for the seminal paper and \cite{MiFK10,LZMHN10} for recent reviews). For our particular concern, it appears as follows. Assume that the geometry is characterized by a real parameter $\eps$. Below, $\eps$ will the amplitude of a local perturbation of the walls of the waveguide. Assume that trapped modes exist for the scattering problem for $\eps=0$ at the frequency $\lambda=\lambda^0$. We remind the reader that trapped modes are non-zero solutions of the homogeneous problem which are of finite energy (in particular they decay exponentially in the two leads of the waveguide). Then, for $\eps\ne0$ small, the scattering matrix, which is of size $2\times2$ in monomode regime, exhibits a rapid change for $\lambda$ varying in a neighbourhood of $\lambda^0$. In this article, we exploit this rapid change together with symmetry considerations of the geometry to provide examples of waveguides where $\rcoef=0$ or $\tcoef=0$ (exactly). The main part of this work is dedicated to prove rigorously that the scattering matrix is not smooth at $(\eps,\lambda)=(0,\lambda^0)$. Our study shares similarities with \cite{ShVe05,ShTu12,ShWe13,AbSh16}. In these articles, the authors analyse the Fano resonance phenomenon in gratings in electromagnetism, a context close to ours, via techniques of complex analysis and by means of tools of the theory of analytic functions of several variables. We wish to emphasize that we follow a completely different path. We will work with the augmented scattering matrix $\mathbb{S}$, an object which has been introduced in  \cite{NaPl94bis,KaNa02} and which is smooth at $(\eps,\lambda)=(0,\lambda^0)$. From the relation existing between $\mathbb{S}$ and the usual scattering matrix $\mathfrak{s}$, this will allow us to understand the behaviour of $\mathfrak{s}$ at $(\eps,\lambda)=(0,\lambda^0)$. In our approach, we never work with complex $\lambda$, that is we never deal with the meromorphic extension of $\mathfrak{s}$. In particular, we do not use the fact that when the geometry is perturbed from $\eps=0$ to $\eps\ne0$, in general the eigenvalue $\lambda^0$ embedded in the continuous spectrum becomes a complex resonance, that is a pole of the meromorphic extension of $\mathfrak{s}$ \cite{AsPV00,Zwor99}. We think that this alternative method to analyse the Fano resonance  can be of interest. For computation of complex resonances and numerical investigations of the Fano resonance phenomenon in waveguides, we refer the reader to \cite{DKLM07,CaLo07,EMADAD08,HeKo08,HoNa09,HeKN12}; for results concerning the existence of trapped modes associated with eigenvalues embedded in the continuous spectrum, see \cite{Urse51,Evan92,EvLV94,DaPa98,LiMc07,Naza10c,NaVi10}.\\
\newline
Finally, let us mention that there are at least two other approaches to get $\rcoef=0$ or $\tcoef=0$. The first technique, allowing one to impose $\rcoef=0$ has been proposed in \cite{BoNa13,BLMN15} (see also \cite{BoNTSu,BoCNSu,ChHS15,ChNa16} for applications to other problems) and relies on variants of the proof of the implicit functions theorem. It has been adapted in \cite{BoCNSub} where it is shown how to get $\tcoef=1$ (not only $|\tcoef|=1$). In this case, the energy of the incident propagating wave is perfectly transmitted and the field itself has no phase shift when going through the structure. This kind of techniques, mimicking the proof of the implicit functions theorem, have been introduced in \cite{Naza11,Naza13} and in \cite{Naza11c,Naza12,CaNR12,Naza11b} for a different problem. In these works, the authors construct small non necessarily symmetric perturbations of the walls of a waveguide that preserve the presence of an eigenvalue embedded in the continuous spectrum (the eigenvalue is not turned into a complex resonance).\\
\newline
A second approach to get $\rcoef=0$, $\tcoef=1$ but also $\tcoef=0$ has been developed in \cite{ChNPSu,ChPaSu}. It consists in working in a symmetric waveguide with a branch of length $L$ and to perform an asymptotic analysis of the scattering coefficients as $L\to+\infty$. In \cite{ChNPSu,ChPaSu}, it is proved that the symmetry of the geometry provides enough constraints to guarantee from the asymptotic results that we have exactly $\rcoef=0$, $\tcoef=1$ or $\tcoef=0$ for certain $L$.\\
\newline 
The paper is structured as follows. We start with a $\mrm{1D}$ toy problem for which everything is explicit in order to illustrate the Fano resonance phenomenon. In Section \ref{SectionSetting}, we describe the setting of the $\mrm{2D}$ waveguide problem considered in this work. In Section \ref{SectionPerturbed}, from a situation supporting trapped modes, we perturb both the geometry and the frequency. The perturbation is parameterized by a small characteristic number $\eps$. Then we compute an asymptotic expansion of the augmented scattering matrix $\mathbb{S}^{\eps}$ as $\eps$ tends to zero. In Section \ref{SectionFano}, we gather the asymptotic results obtained for $\mathbb{S}^{\eps}$ and using the formula linking $\mathbb{S}^{\eps}$ to the usual scattering matrix $\mathfrak{s}^{\eps}$, we analyse the behaviour of $\mathfrak{s}^{\eps}$ as $\eps\to0$. This allows us to explain the Fano resonance phenomenon. In Section \ref{SectionRTNull}, we focus our attention on the monomode regime (low frequency) and show how to use the Fano resonance to exhibit waveguides where $\rcoef=0$ or $\tcoef=0$. In Section \ref{SectionNumerics}, we provide numerical results to illustrate our theorems. Finally, we give a short conclusion. The main results of this article are Theorems \ref{MainThmAsympto} and \ref{thmRTNull}.

\section{A $\mrm{1D}$ toy problem}\label{SectionToyPb}
\begin{figure}[!ht]
\centering
\begin{tikzpicture}[scale=1.5]
\begin{scope}[shift={(-1,0.1)}]
\draw[->] (3,0.2)--(3.6,0.2);
\draw[->] (3.1,0.1)--(3.1,0.7);
\node at (3.65,0.3){\small $x$};
\node at (3.25,0.6){\small $y$};
\end{scope}
\draw[dashed] (-4.5,0)--(-4,0);
\draw (-4,0)--(1,0);
\draw (0,0)--(0,1);
\node at (-2.5,-0.3){\small $\Om_1$};
\node at (0.25,0.5){\small $\Om_2$};
\node at (0.5,-0.3){\small $\Om_3$};
\node at (0,-0.25){\small $O$};
\draw[fill=gray!80,draw=none](0,0) circle (0.05);
\end{tikzpicture}
\caption{A $\mrm{1D}$ geometry. \label{Domain1D}} 
\end{figure}

In this section, we consider a $\mrm{1D}$ toy problem to illustrate the Fano resonance. We work in the geometry (see Figure \ref{Domain1D}) 
\[
\Om:=\Om_1\cup\Om_2\cup\Om_3\qquad\mbox{ with }\Om_1:=(-\infty;0)\times\{0\},\ \Om_2:=\{0\}\times(0;1),\,\Om_3:=(0;1)\times\{0\}.
\]
For a function $\varphi$ defined in $\Om$, set $\varphi_i:=\varphi|_{\Om_i}$. Working in suitable coordinates, we can see the $\Om_i$ as $\mrm{1D}$ domains. We consider the Helmholtz problem with Neumann boundary conditions 
\begin{equation}\label{Problem1D}
-\varphi''=k^2\varphi\mbox{ in }\Om,\qquad\qquad\ 
\begin{array}{|l}
\varphi_1(0)=\varphi_2(0)=\varphi_3(0),\\[2pt]
\varphi'_1(0)=\varphi'_2(0)+\varphi'_3(0),\\[2pt]
\varphi'_2(1)=\varphi'_3(1)=0.
\end{array}
\end{equation}
In particular, at the junction point $O$, we impose continuity of the field and conservation of the flux (Kirchhoff law). We are interested in the scattering process of the incident wave $\varphi_i(x)=e^{ikx}$ propagating from $-\infty$. We denote $\varphi$ and $\varphi_s=\varphi-\varphi_i$ the corresponding total and  scattered fields. We impose that $\varphi_s$  is outgoing at infinity. For the simple problem considered here, the radiation condition boils down to assume that $\varphi_s$ writes as $\varphi_s(x)=R\,e^{-ikx}$ for some complex constant $R$ called the reflection coefficient. Using the two boundary conditions of (\ref{Problem1D}), we are led to look for a solution $\varphi$ such that
\[
\varphi_1(x)=e^{ikx}+R\,e^{-ikx},\quad \varphi_2(y)=A\cos(k(y-1)),\quad \varphi_3(x)=B\cos(k(x-1)),
\]
where $A$, $B\in\Cplx$. Writing the transmission conditions at the junction point $O$, we obtain that $R$, $A$, $B$ must solve the system
\begin{equation}\label{system33}
\mathbb{M}\Phi=F\qquad\mbox{ with }\mathbb{M}:=\left(\begin{array}{ccc}
1 & -\cos(k) & 0\\
0 & \cos(k) & -\cos(k)\\
i & \sin(k) & \sin(k)
\end{array}\right),\  \Phi:=\left(\begin{array}{c}
R \\
A \\
B
\end{array}\right)\  F:=\left(\begin{array}{c}
-1 \\
0 \\
i
\end{array}\right).
\end{equation}
One finds that this system (and so Problem (\ref{Problem1D}) with the above mentioned radiation condition) is uniquely solvable if and only $k\not\in (2\N+1)\pi/2$. When $k\in (2\N+1)\pi/2$, the kernel of Problem (\ref{Problem1D}) coincides with $\mrm{span}(\varphi_{\mrm{tr}})$ where $\varphi_{\mrm{tr}}$ is the trapped mode such that $\varphi_{\mrm{tr}}(x)=0$ in $\Om_1$, $\varphi_{\mrm{tr}}(y)=\sin(ky)$ in $\Om_2$ and $\varphi_{\mrm{tr}}(x)=-\sin(kx)$ in $\Om_3$. On the other hand, for any $k>0$, one can check that System (\ref{system33}) (and so Problem (\ref{Problem1D})) admits a solution. Moreover, the coefficient $R$ is always uniquely defined (even when $k\in (2\N+1)\pi/2$) and such that
\begin{equation}\label{DefR1D}
R=\cfrac{\cos(k)+2i\sin(k)}{\cos(k)-2i\sin(k)}.
\end{equation}
The map $k\mapsto R(k)$ is $\pi$-periodic and $|R(k)|=1$. The latter relation, which is due to conservation of energy, guarantees that $R(k)=e^{i\theta(k)}$ for some phase $\theta(k)\in\R/(2\pi\Z)$.\\
\newline
Now we consider the same problem in the perturbed geometry $\Om^{\eps}:=\Om_1\cup\Om_2\cup\Om^{\eps}_3$ with $\Om^{\eps}_3:=(0;1+\eps)\times\{0\}$ and $\eps\in\R$ small. We denote with a superscript $\eps$ all the previously introduced quantities. In $\Om^{\eps}$, the resolution of the previous scattering problem leads to solve the system
\begin{equation}\label{system33eps}
\mathbb{M}^{\eps}\Phi^{\eps}=F\qquad\mbox{ with }\mathbb{M}^{\eps}:=\left(\begin{array}{ccc}
1 & -\cos(k) & 0\\
0 & \cos(k) & -\cos(k(1+\eps))\\
i & \sin(k) & \sin(k(1+\eps))
\end{array}\right),\  \Phi^{\eps}:=\left(\begin{array}{c}
R^{\eps} \\
A^{\eps} \\
B^{\eps}
\end{array}\right).
\end{equation}
The vector $F$ is the same as in (\ref{system33}). For $\eps\ne0$ small, we find that the determinant of $\mathbb{M}^{\eps}$ does not vanish. As a consequence, Problem (\ref{Problem1D}) set in $\Om^{\eps}$ has a unique solution. One finds 
\begin{equation}\label{CoeffReflection1Deps}
R^{\eps}=\cfrac{\cos(k)\cos(k(1+\eps))+i\sin(k(2+\eps))}{\cos(k)\cos(k(1+\eps))-i\sin(k(2+\eps))}.
\end{equation}
Again, we have $|R^{\eps}(k)|=1$ (conservation of energy) so we can write $R^{\eps}(k)=e^{i\theta^{\eps}(k)}$ for some $\theta^{\eps}(k)\in\R/(2\pi\Z)$.  Note that  $\theta^{0}=\theta$ where $\theta$ appears after (\ref{DefR1D}). The map $k\mapsto\theta^{\eps}(k)$ is displayed in Figure \ref{FigPhase} for several values of $\eps$ (see also the alternative representation (\ref{FigFanoSpace1D})). We observe that for $\eps\ne0$, the curve $k\mapsto\theta^{\eps}(k)$ has a fast variation for $k$ close to $\pi/2$. The variation is even faster as $\eps\ne0$ gets small. On the other hand, for $\eps=0$ the curve $k\mapsto\theta^{0}(k)$ has a very smooth behaviour. We emphasize that for $(\eps,k)=(0,\pi/2)$, as mentioned above, trapped modes exist for Problem (\ref{Problem1D}).		
\begin{figure}[!ht]
\centering
\includegraphics[width=0.47\textwidth]{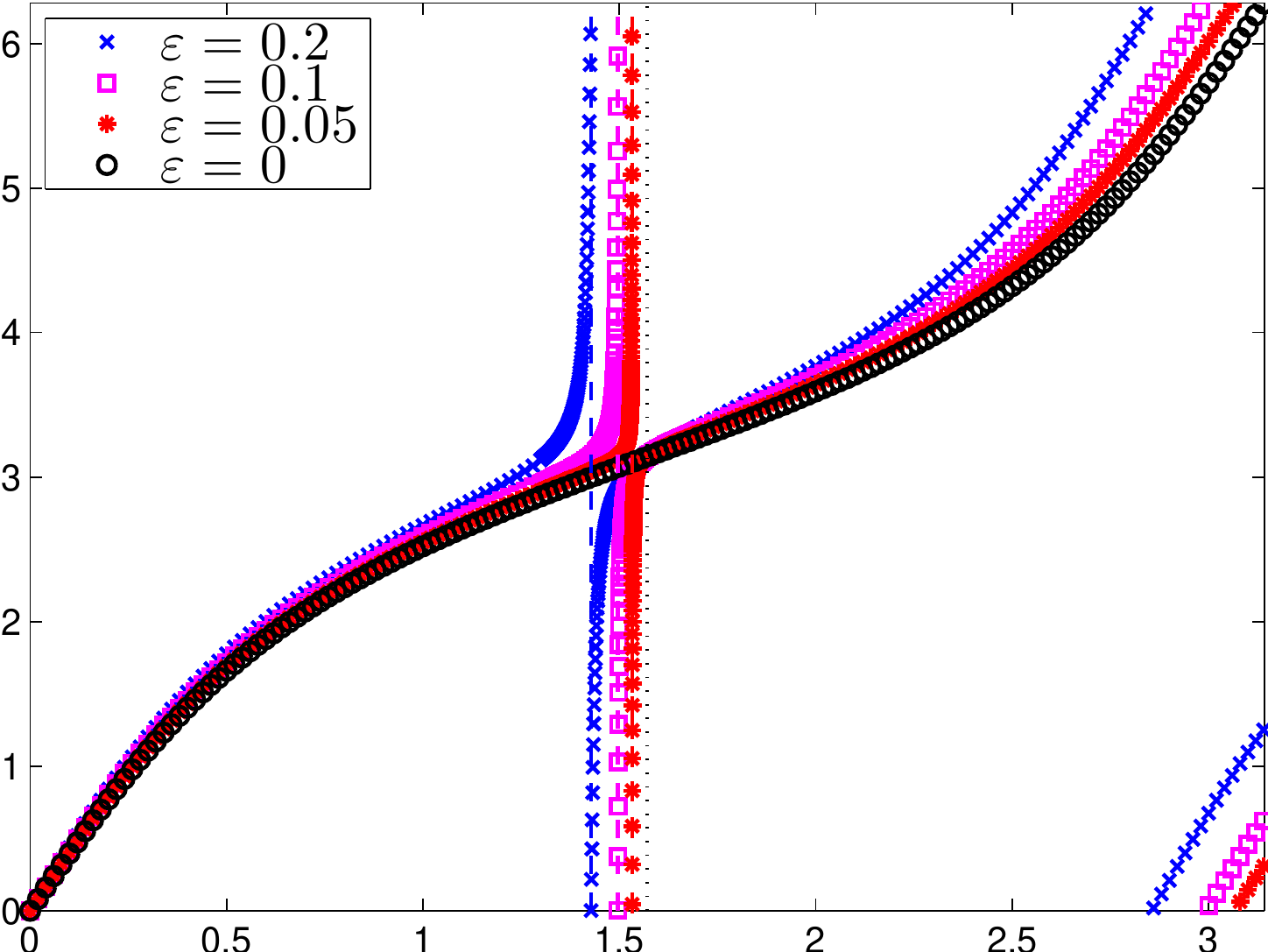}\quad\includegraphics[width=0.47\textwidth]{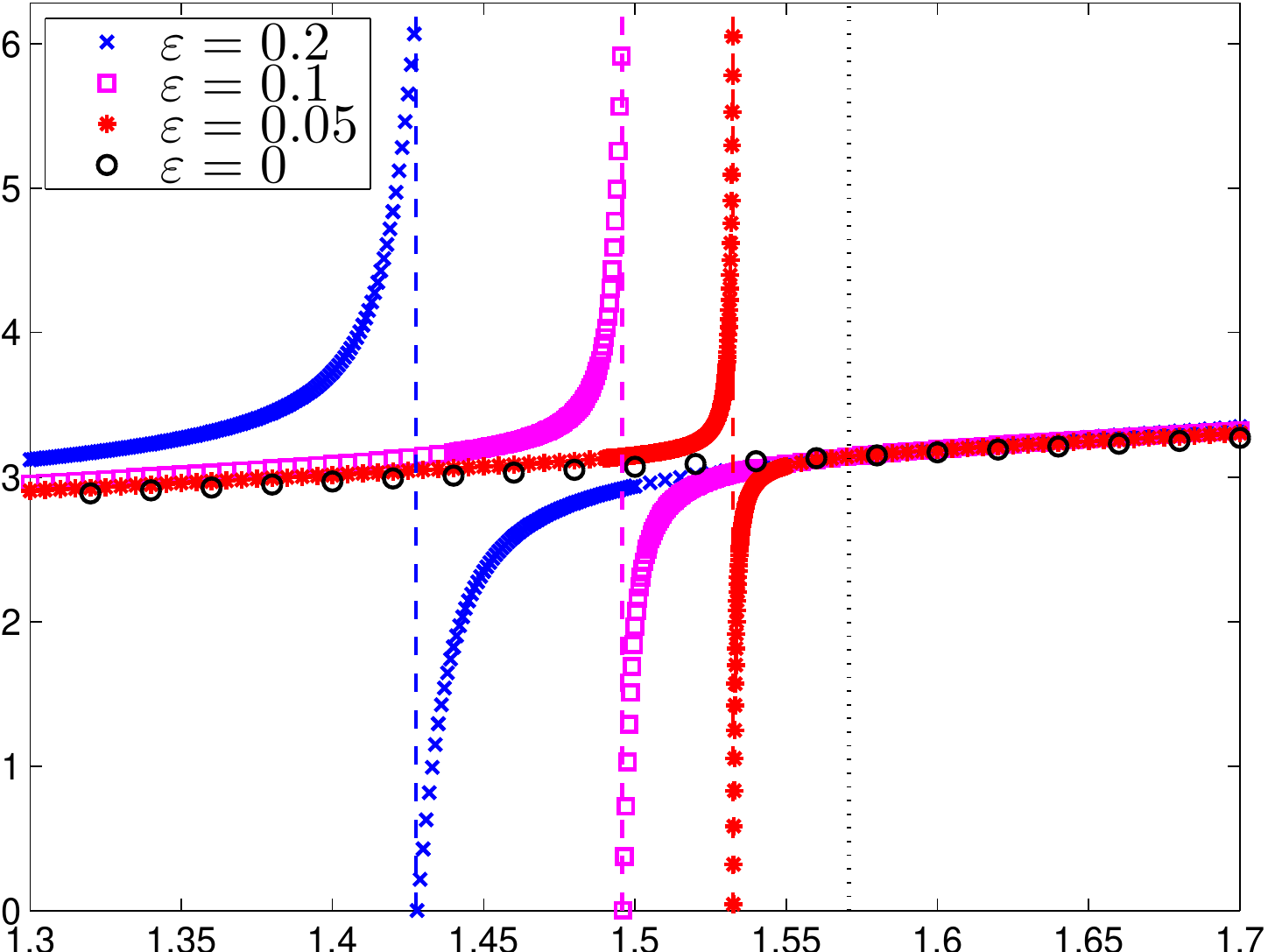}
\caption{Maps $k\mapsto\theta^{\eps}(k)$ for several values of $\eps$. The right picture is a zoom on the left picture around $k=\pi/2$ (marked by the vertical black dotted line). The vertical coloured dashed lines indicate the values of $k$ such that $\theta^{\eps}(k)=0$. \label{FigPhase}}
\end{figure}

\begin{figure}[!ht]
\centering
\includegraphics[width=0.5\textwidth]{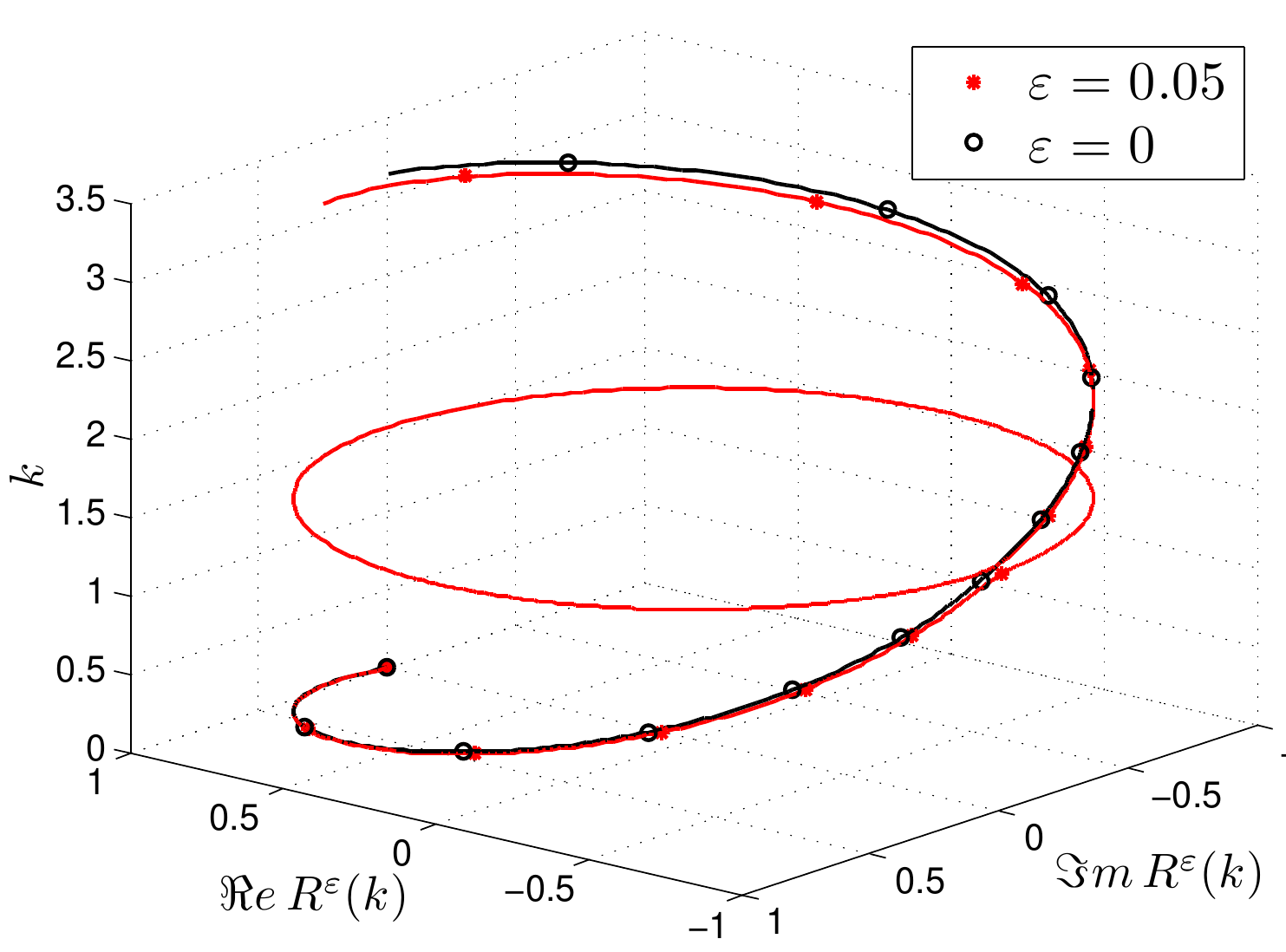}
\caption{Parametric curves $k\mapsto(\Re e\,R^{\eps}(k),\Im m\,R^{\eps}(k))$ for $k\in(0;\pi)$. \label{FigFanoSpace1D}}
\end{figure}
\noindent In order to study the variations of the reflection coefficient with respect to the frequency and the geometry, we define the map $\mathcal{R}:\R^2\to\Cplx$ such that 
\begin{equation}\label{R2variables}
\mathcal{R}(\eps,k)=\cfrac{\cos(k)\cos(k(1+\eps))+i\sin(k(2+\eps))}{\cos(k)\cos(k(1+\eps))-i\sin(k(2+\eps))}.
\end{equation}
With such a notation, we have $R^{\eps}(k)=\mathcal{R}(\eps,k)$ and $R(k)=\mathcal{R}(0,k)$. For all $k\in(0;\pi)$, there holds $\lim_{\eps\to0}\mathcal{R}(\eps,k)=\mathcal{R}(0,k)$. Now assume that the frequency and the geometry are related by some prescribed law in a neighbourhood of the point $(\eps,k)=(0,\pi/2)$ corresponding to a setting supporting trapped modes. For example, assume that $k=\pi/2+\eps k'$ for a given $k'\in\R$. Then for $k'\ne-\pi/4$, starting from expression (\ref{R2variables}), we find as $\eps\to0$ the expansion
\[
\mathcal{R}(\eps,\pi/2+\eps k')=-1+\eps\,\Big(\,\cfrac{-2ik'(\pi+2k')}{\pi+4k'}\,\Big)+O(\eps^2).
\] 
Note that we have $\mathcal{R}(0,\pi/2)=-1$. For $k'=-\pi/4$ and more generally, for $k=\pi/2-\eps(\pi/4)+\eps^2\mu$ with $\mu\in\R$, we obtain
\begin{equation}\label{Resultat1DMobius}
\mathcal{R}(\eps,\pi/2-\eps(\pi/4)+\eps^2\mu)=g(\mu)+O(\eps)\qquad\mbox{ with }g(\mu)=\cfrac{\pi^2+i(32\mu-4\pi)}{\pi^2-i(32\mu-4\pi)}.
\end{equation}
Denote $\mathscr{S}:=\{z\in\Cplx\,|\,|z|=1\}$ the unit circle of the complex plane. Classical results concerning the M\"{o}bius transform (see e.g.  \cite[Chap. 5]{Henr74}) guarantee that $g$ is a bijection between $\R$ and $\mathscr{S}\setminus\{-1\}$. Thus for any $z_0\in\mathscr{S}$, we can find a path $\{\gamma(s),\,s\in(0;1)\}\subset\R^2$ such that $\lim_{s\to1}\gamma(s)=(0,\pi/2)$ and $\lim_{s\to1}\mathcal{R}(\gamma(s))=z_0$ (see Figure \ref{FigFano1DParabola} right). This proves that the map $\mathcal{R}(\cdot,\cdot):\R^2\to\Cplx$ is not continuous at $(0,\pi/2)$. This shows also that for $\eps_0\ne0$ small fixed (see the vertical red dashed line in Figure \ref{FigFano1DParabola} left), the curve $k\mapsto \mathcal{R}(\eps_0,k)$ must exhibit a rapid change. Indeed, for $\mu\in[-C\eps_0^{-1};C\eps_0^{-1}]$ for some arbitrary $C>0$ (which is only a small change for $k$) leads to a large change for $\mathcal{R}(\eps_0,\pi/2-\eps_0(\pi/4)+\eps_0^2\mu)$. This is exactly what we observed in Figure \ref{FigPhase}.  

\begin{figure}[!ht]
\centering
\raisebox{1.3cm}{\begin{tikzpicture}[scale=0.95]
\draw[->] (-2.9,0) -- (3.1,0) node[right] {$\eps$};
\draw[->] (0,-0.2) -- (0,3) node[right] {$k$};
\node at (0,3.14/2-0.1) [left] {$\pi/2$};
\begin{scope}
\clip(-2.5,-0.5) rectangle (2.5,3);
\draw[domain=-2.7:2.7,smooth,variable=\x,blue] plot ({\x},{3.14/2-\x*3.14/4+0.2*\x*\x});
\draw[domain=-2.7:2.7,smooth,variable=\x,blue] plot ({\x},{3.14/2-\x*3.14/4+0.3*\x*\x});
\draw[domain=-2.7:2.7,smooth,variable=\x,blue] plot ({\x},{3.14/2-\x*3.14/4+0.1*\x*\x});
\draw[domain=-2.7:2.7,smooth,variable=\x,blue] plot ({\x},{3.14/2-\x*3.14/4+0.05*\x*\x});
\draw[domain=-2.7:2.7,smooth,variable=\x,blue] plot ({\x},{3.14/2-\x*3.14/4-0.05*\x*\x});
\draw[domain=-2.7:2.7,smooth,variable=\x,blue] plot ({\x},{3.14/2-\x*3.14/4-0.2*\x*\x});
\draw[domain=-2.7:2.7,smooth,variable=\x,blue] plot ({\x},{3.14/2-\x*3.14/4-0.3*\x*\x});
\draw[domain=-2.7:2.7,smooth,variable=\x,blue] plot ({\x},{3.14/2-\x*3.14/4-0.1*\x*\x});
\end{scope}
\draw[red,dashed,very thick] (0.6,-0.2) -- (0.6,2.8);
\node at (0.6,-0.2) [below] {\textcolor{red}{$\eps_0$}};
\end{tikzpicture}}\qquad\includegraphics[width=0.47\textwidth]{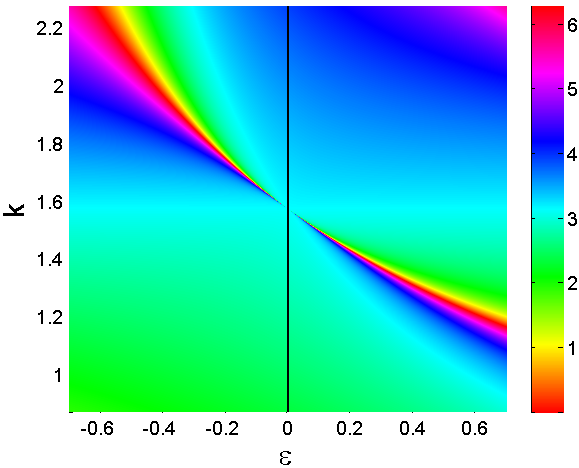}
\caption{Left: several parabolic paths $\{\gamma(s),\,s\in(0;1)\}\subset\R^2$ such that $\lim_{s\to1}\gamma(s)=(0,\pi/2)$. According to the path, the limit of the coefficient $\mathcal{R}(\gamma(s))$ defined in (\ref{R2variables}) as $s\to1$ is different. Right: the colours indicate the phase of $\mathcal{R}(\eps,k)$. This phase is valued in $[0;2\pi)$.\label{FigFano1DParabola}}
\end{figure}

\newpage

\section{Setting}\label{SectionSetting}

~\vspace{-0.8cm}
\begin{figure}[!ht]
\centering
\begin{tikzpicture}[scale=1.5]
\draw[fill=gray!30,draw=black](-1.5,2.5) circle (0.5);
\draw[fill=gray!30,draw=none](-2.5,1) rectangle (-0.5,2.5);
\draw[fill=gray!30,draw=none](-2,1) rectangle (2,2);
\draw[fill=gray!30,draw=none](-5,1) rectangle (-2,2);
\draw (-5,2)--(-2.5,2)--(-2.5,2.5)--(-2,2.5);
\draw (-1,2.5)--(-0.5,2.5)--(-0.5,2)--(-0.5,2) --(2,2);
\draw (-5,1)--(2,1); 
\draw[dashed] (3,1)--(2,1); 
\draw[dashed] (3,2)--(2,2);
\draw[dashed] (-6,2)--(-5,2);
\draw[dashed] (-6,1)--(-5,1);
\draw[->] (3,1.2)--(3.6,1.2);
\draw[->] (3.1,1.1)--(3.1,1.7);
\node at (3.65,1.3){\small $x$};
\node at (3.25,1.6){\small $y$};
\node at (-4,1.2){\small $\Om$};
\draw[dashed,line width=0.5mm,gray!80] (-1.5,1)--(-1.5,3.8);
\node[above] at (0,0.5){\small $d$};
\draw (0,0.9)--(0,1.1);
\node[above] at (-3,0.5){\small $-d$};
\draw (-3,0.9)--(-3,1.1);
\end{tikzpicture}
\caption{Example of geometry $\Om$. The vertical thick dashed line marks the axis of symmetry of the domain. \label{DomainOriginal2D}} 
\end{figure}
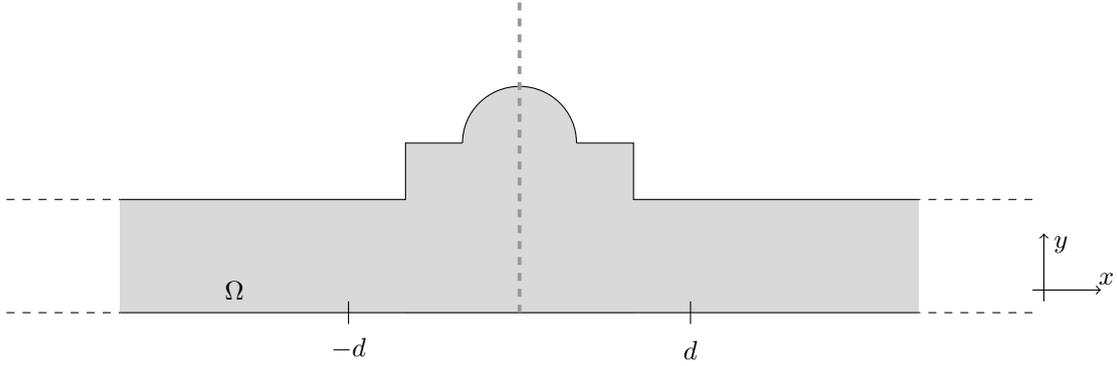

In this section, we introduce notation as well as classical objects and tools that will be used in the rest of the paper. Consider a connected open set $\Om\subset\R^2$ with Lipschitz boundary  $\partial\Om$ which coincides with the reference strip 
\[
\{ (x,y)\in\R\times(0;1)\}
\]
outside a given ball centered at $O$ of radius $d>0$ (see Figure \ref{DomainOriginal2D}). To simplify the presentation, we assume that $\Om$ is symmetric with respect to the $(Oy)$ axis: $\Om=\{(-x,y)\,|\,(x,y)\in\Om\}$. We emphasize that all the analysis we develop can be written in non symmetric waveguides as well. The assumption of symmetry is introduced here only to simplify the presentation. We work in an academic $\mrm{2D}$ waveguide but everything could be done in higher dimension and in a more complex geometry. We assume that the propagation of time-harmonic waves in $\Om$ is governed by the Helmholtz equation with Neumann boundary conditions
\begin{equation}\label{PbInitial}
\begin{array}{|rcll}
\Delta u + \lambda u & = & 0 & \mbox{ in }\Om\\[3pt]
 \partial_{\nu}u  & = & 0  & \mbox{ on }\partial\Om. 
\end{array}
\end{equation}
In this problem, $u$ is the physical field (acoustic pressure, velocity potential, component of the electromagnetic field,...), $\Delta$ denotes the 2D Laplace operator, $\lambda$ is a parameter which is proportional to the square of the frequency and $\nu$ stands for the normal unit vector to $\partial\Om$ directed to the exterior of $\Om$. We assume that the geometry is such that $\lambda=\lambda^0\in(\pi^2(m-1)^2;\pi^2m^2)$, $m\in\N^{\ast}:=\N\setminus\{0\}$, is a simple eigenvalue of the Laplace operator with homogeneous Neumann boundary conditions. In other words, we assume that Problem (\ref{PbInitial}) admits a non zero solution $u_{\tr}$ of finite energy (in $\mL^2(\Om)$) and that any $\mL^2$ solution of (\ref{PbInitial}) is proportional to $u_{\tr}$. Note that in $\Om$ the continuous spectrum of the Neumann Laplacian is equal to $\sigma_c=[0;+\infty)$. As a consequence, we assume here the existence of trapped modes associated with an eigenvalue embedded in the continuous spectrum. In view of symmetry, we observe two options. Either $u_{\tr}$ is symmetric with respect to the $(Oy)$ axis and $u_{\tr}$ satisfies $\partial_xu_{\tr}=0$ on $\Upsilon:=\{(x,y)\in\Om\,|\,x=0\}$. Or $u_{\tr}$ is skew-symmetric with respect to the $(Oy)$ axis and there holds $u_{\tr}=0$ on $\Upsilon$.

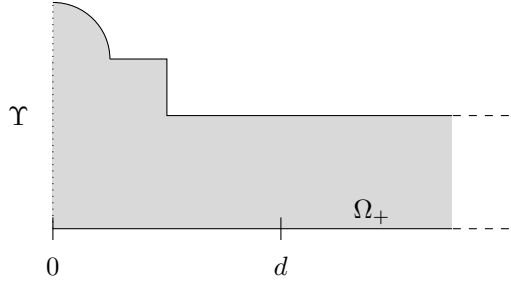
\begin{figure}[!ht]
\centering
\begin{tikzpicture}[scale=1.5]
\draw[fill=gray!30,draw=none](0,2.5)--(0.5,2.5) arc (0:90:0.5);
\draw[fill=gray!30,draw=black](0.5,2.5) arc (0:90:0.5);
\draw[fill=gray!30,draw=none](0,1) rectangle (1,2.5);
\draw[fill=gray!30,draw=none](0,1) rectangle (3.5,2);
\draw (0.5,2.5)--(1,2.5)--(1,2)--(1,2) --(3.5,2);
\draw (0,1)--(3.5,1); 
\draw[dashed] (4,1)--(3.5,1); 
\draw[dashed] (4,2)--(3.5,2);
\node at (2.8,1.15){\small $\Om_+$};
\node[above] at (0,0.5){\small $0$};
\node[above] at (2,0.5){\small $d$};
\draw (2,0.9)--(2,1.1);
\draw (0,0.9)--(0,1.1);
\node at (-0.3,2){\small $\Upsilon$};
\draw[dotted] (0,1)--(0,3);
\end{tikzpicture}
\caption{Half-waveguide $\Om_+$. \label{HalfWaveguide}} 
\end{figure}

\noindent In the following, we restrict the study of (\ref{PbInitial}) in the half-waveguide $\Om_{+}:=\{(x,y)\in\Om\,|\,x>0\}$ (see Figure \ref{HalfWaveguide}). Then we are led to consider the problem  
\begin{equation}\label{PbInitialHalf}
\begin{array}{|rcll}
\Delta u + \lambda u & = & 0 & \mbox{ in }\Om_+\\[3pt]
 \partial_{\nu}u  & = & 0  & \mbox{ on }\partial\Om\cap\partial\Om_+\\[3pt]
\mrm{ABC}(u) & = & 0  & \mbox{ on }\Upsilon.
\end{array}
\end{equation}
In (\ref{PbInitialHalf}), $\mrm{ABC}$ stands for ``Artificial Boundary Condition'' and corresponds to Neumann or Dirichlet boundary conditions according to the symmetry of the function $u_{\tr}$. Clearly $u_{\tr}$ is also a trapped mode for Problem (\ref{PbInitialHalf}) with $\lambda=\lambda^0$ (we make no distinction between $u_{\tr}$ and its restriction to $\Om_+$). To set ideas, we assume that $\|u_{\tr}\|_{\mL^2(\Om_+)}=1$. Using decomposition in Fourier series, we obtain the expansion 
\begin{equation}\label{DefTrappedMode}
u_{\tr}= K\,e^{-\sqrt{\pi^2m^2-\lambda^0} x}\cos(\pi m y)+\tilde{u}_{\tr}\quad\mbox{ for } x\ge d,
\end{equation}
where $K$ is a constant and where $\tilde{u}_{\tr}$ is a remainder which decays as $O(e^{-\sqrt{\pi^2(m+1)^2-\lambda^0} x})$ when $x\to+\infty$. We assume that $u_{\tr}$ has a slow decay, i.e. we assume that there holds $K\ne0$. In case $K=0$, the analysis below must be adapted but can be done. 
Multiplying $u_{\tr}$ by a constant, $K$ can be fixed such that $K>0$.\\
\newline
\noindent Though we assumed the existence of trapped modes in $\Om_+$ at the given frequency, we wish to stress out that all the objects we define up to the end of this section are general objects that are unrelated to the existence of $u_{\tr}$. For $j=0,\dots,m-1$, we define the propagating modes 
\begin{equation}\label{DefModes1}
w_j^{\pm}(x,y)=(a_j)^{-1/2}\,e^{\pm i\alpha_j x}\cos(\pi jy)\quad \mbox{with}\quad \alpha_j=\sqrt{\lambda-\pi^2j^2}\quad\mbox{and}\quad a_j=\begin{array}{|ll}
2\alpha_j & \mbox{ for }j=0\\
\alpha_j & \mbox{ for }j\ne0.
\end{array}
\end{equation}
Note that the normalization constants are chosen so that relations (\ref{SymplecticFormModes}) below hold. Physically, the $w_j^{+}$ (resp. the $w_j^{-}$) are waves which propagate to $+\infty$ (from $+\infty$). Set also 
\[
v_m^{\pm}(x,y)=(a_m)^{-1/2}e^{\pm\alpha_mx}\cos(\pi my)\quad \mbox{with}\quad \alpha_m=\sqrt{\pi^2m^2-\lambda}\quad\mbox{and}\quad a_m=\alpha_m,
\]
and from $v_m^{\pm}$, define the wave packets
\begin{equation}\label{DefModes2}
w_m^{\pm}(x,y)=(2)^{-1/2}\,(v_m^{+}(x,y)\mp iv_m^{-}(x,y)).
\end{equation} 
Remark that $w_m^{\pm}$ are combinations of exponentially growing and decaying modes as $x\to+\infty$. These maybe rather unusual objects will serve to define a scattering matrix, namely the augmented scattering matrix in (\ref{defScatteringMatrix}) (see \cite{NaPl94bis,KaNa02} and \cite{Naza11,Naza12}), which differs from the usual scattering matrix and which will allow us to detect the presence of trapped modes as $u_{\tr}$ with a non zero constant $K\ne0$ in the expansion (\ref{DefTrappedMode}). All the $w^{\pm}_j$, $j=0,\dots,m$, and the  $v_m^{\pm}$ are solutions of the Helmholtz equation in the reference strip $\R\times(0;1)$ and satisfy the homogeneous Neumann boundary condition at $y\in\{0,1\}$. In view of the forthcoming analysis, we introduce a useful tool, namely the symplectic (sesquilinear and anti-hermitian ($q(u,v)=-\overline{q(v,u)}$)) form $q(\cdot,\cdot)$ defined by 
\begin{equation}\label{DefSymplecticForm}
q(u,v)=\int_{0}^1 \partial_x u(d,y)\overline{v}(d,y)-u(d,y)\partial_x\overline{v}(d,y)\,dy.
\end{equation}
With this definition, one can check that we have the orthogonality and normalization conditions
\begin{equation}\label{SymplecticFormModes}
q(w_j^{\pm},w_k^{\pm})=\pm i\delta_{jk}\quad\mbox{ and}\quad q(w_j^{\pm},w_k^{\mp})=0,\quad \mbox{ for }j,k=0,\dots,m,
\end{equation}
where $\delta_{jk}$ stands for the Kronecker symbol. In the geometry $\Om_+$, they are the scattering and augmented scattering solutions
\begin{eqnarray}
\label{scatteringSolutions}\zeta_j&=&w_j^{-}+\dsp\sum_{k=0}^{m-1}s_{kj}\,w_k^{+}+\tilde{\zeta}_j,\qquad j=0,\dots,m-1\\[4pt]
\label{AugscatteringSolutions}Z_j&=&w_j^{-}+\dsp\sum_{k=0}^{m}S_{kj}w_{k}^{+}+\tilde{Z}_j,\qquad j=0,\dots,m.
\end{eqnarray}
Here the $s_{kj}$, $S_{kj}$ are complex constants while the $\tilde{\zeta}_j$, $\tilde{Z}_j$ are remainders which decay respectively as $O(e^{-\sqrt{\pi^2m^2-\lambda} x})$ and as $O(e^{-\sqrt{\pi^2(m+1)^2-\lambda} x})$ when $x\to+\infty$. From the solutions (\ref{scatteringSolutions}), (\ref{AugscatteringSolutions}), we can define two matrices, namely the \textit{scattering matrix} $\mathfrak{s}$ and the \textit{augmented scattering matrix} $\mathbb{S}$ such that
\begin{equation}\label{defScatteringMatrix}
\mathfrak{s}:=\left(s_{kj}\right)_{0\le k,j\le m-1}\in\mathbb{C}^{m\times m}\qquad\qquad\mathbb{S}:=\left(S_{kj}\right)_{0\le k,j\le m}\in\mathbb{C}^{m+1\times m+1}.
\end{equation}
The matrix $\mathfrak{s}$ is the usual scattering matrix while $\mathbb{S}$ has been introduced in \cite{NaPl94bis,KaNa02,Naza11,Naza12}. Observing that $q(\zeta_k,\zeta_j)=q(Z_k,Z_j)=0$ and using formulas (\ref{SymplecticFormModes}) to compute in a different way the quantities $q(\zeta_k,\zeta_j)$, $q(Z_k,Z_j)$, one can verify that $\mathfrak{s}$, $\mathbb{S}$ are unitary ($\mathfrak{s}\,\overline{\mathfrak{s}}^{\top}=\mrm{Id}_{m}$, $\mathbb{S}\,\overline{\mathbb{S}}^{\top}=\mrm{Id}_{m+1}$) and symmetric ($\mathfrak{s}=\mathfrak{s}^{\top}$, $\mathbb{S}=\mathbb{S}^{\top}$). We emphasize that these matrices are uniquely defined even when trapped modes exist at the given frequency. In the following, to simplify notation, we shall write
\[
\zeta=w^{-}+w^{+}\mathfrak{s}+\tilde{\zeta}
\]
with the rows $\zeta=(\zeta_0,\dots,\zeta_{m-1})$, $w^{\pm}=(w^{\pm}_0,\dots,w^{\pm}_{m-1})$, $\tilde{\zeta}=(\tilde{\zeta}_0,\dots,\tilde{\zeta}_{m-1})$. Set also
\[
\mathbb{S}=\left(\begin{array}{cc}\mathbb{S}_{\bullet\bullet}& \mathbb{S}_{\bullet m}\\
\mathbb{S}_{m \bullet } & \mathbb{S}_{m m}
\end{array}\right)\qquad\mbox{with }\mathbb{S}_{\bullet\bullet}\in\mathbb{C}^{m\times m},\ \mathbb{S}_{\bullet m}=\mathbb{S}_{m\bullet}^{\top}\in\mathbb{C}^{m\times 1}\mbox{ and }\mathbb{S}_{m m}\in\mathbb{C}.
\]
Note that in general, the block $\mathbb{S}_{\bullet\bullet}$ is not equal to $\mathfrak{s}$. More precisely, we have the following important formula (see \cite[Thm. 3]{Naza11})
\begin{equation}\label{FormulaScatteringToAugmented}
\mathfrak{s}=\mathbb{S}_{\bullet\bullet}-\mathbb{S}_{\bullet m}(1+\mathbb{S}_{m m})^{-1}\mathbb{S}_{m\bullet}.
\end{equation}
Observe that (\ref{FormulaScatteringToAugmented}) is also valid when $\mathbb{S}_{m m}=-1$. Indeed, in this case the fact that $\mathbb{S}$ is unitary implies that $\mathbb{S}_{\bullet m}=\mathbb{S}_{m\bullet}^{\top}=0$. More precisely, from this analysis we obtain that $\mathfrak{s}=\mathbb{S}_{\bullet\bullet}$ if and only if $|\mathbb{S}_{m m}|=1$. The augmented scattering matrix is a tool to detect the presence of trapped modes. Indeed, we have the following algebraic criterion (see e.g. \cite[Thm. 2]{Naza11}). 
\begin{lemma}\label{LemmaExistenceTrappedMode}
If $\mathbb{S}_{m m}=-1$, then the function $Z_{m}$ defined in (\ref{AugscatteringSolutions}) is a trapped mode for Problem (\ref{PbInitialHalf}) in $\Om_+$. 
\end{lemma}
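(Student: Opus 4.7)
The strategy is to exploit the interplay between unitarity/symmetry of $\mathbb{S}$ and the very specific definition (\ref{DefModes2}) of the packets $w_m^{\pm}$: the hypothesis $\mathbb{S}_{mm}=-1$ will play a double role, first forcing every propagating coefficient in the $m$-th column of $\mathbb{S}$ to vanish, and then forcing the exponentially growing ingredient of $Z_m$ to cancel out. Once both cancellations are in hand, $Z_m$ is automatically exponentially decaying, hence square-integrable.

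Step one: read off the identity $\mathbb{S}\overline{\mathbb{S}}^\top=\mrm{Id}_{m+1}$ on the diagonal entry $(m,m)$ to get $\sum_{k=0}^m |S_{mk}|^2 = 1$. Since the single term $|S_{mm}|^2$ already saturates this sum, one gets $S_{mk}=0$ for $k=0,\dots,m-1$, and by the symmetry $\mathbb{S}=\mathbb{S}^\top$ the same holds for $S_{km}$. Hence the expansion (\ref{AugscatteringSolutions}) for $j=m$ collapses to
\[
Z_m \;=\; w_m^- \,-\, w_m^+ \,+\, \tilde{Z}_m.
\]

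Step two: plug in definition (\ref{DefModes2}) to compute
\[
w_m^- - w_m^+ \;=\; \tfrac{1}{\sqrt{2}}\bigl(v_m^+ + i v_m^-\bigr) - \tfrac{1}{\sqrt{2}}\bigl(v_m^+ - i v_m^-\bigr) \;=\; i\sqrt{2}\,v_m^-.
\]
The growing mode $v_m^+$ cancels exactly. Since $v_m^-(x,y)=a_m^{-1/2}e^{-\alpha_m x}\cos(\pi m y)$ decays like $e^{-\alpha_m x}$ and $\tilde{Z}_m$ decays even faster (like $e^{-\sqrt{\pi^2(m+1)^2-\lambda}\,x}$), the representation
\[
Z_m(x,y) \;=\; i\sqrt{2}\,v_m^-(x,y) + \tilde{Z}_m(x,y), \qquad x\ge d,
\]
shows that $Z_m\in\mL^2(\Om_+)$.

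Step three: $Z_m$ is by construction a solution of $\Delta u+\lambda u=0$ in $\Om_+$ satisfying the Neumann condition on $\partial\Om\cap\partial\Om_+$ and the artificial boundary condition on $\Upsilon$, so it only remains to check non-triviality. The Fourier coefficient of $Z_m(x,\cdot)$ on the mode $\cos(\pi m y)$ is $i\sqrt{2}\,a_m^{-1/2}e^{-\alpha_m x}$ plus an exponentially smaller remainder; this never vanishes for $x\ge d$. Hence $Z_m\not\equiv 0$, and $Z_m$ is a trapped mode for (\ref{PbInitialHalf}). The only non-routine point is recognizing that $\mathbb{S}_{mm}=-1$ must be used twice — via unitarity to kill the propagating part and via the specific sign in (\ref{DefModes2}) to kill the growing exponential — but once this is noticed the proof is essentially algebraic.
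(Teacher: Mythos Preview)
Your proof is correct and follows essentially the same approach as the paper's own proof: use unitarity of $\mathbb{S}$ together with $|\mathbb{S}_{mm}|=1$ to kill the propagating entries $S_{km}$, then use the definition (\ref{DefModes2}) to see that $w_m^--w_m^+=i\sqrt{2}\,v_m^-$ so the growing exponential drops out, leaving an exponentially decaying (hence $\mL^2$) nontrivial solution. The paper's version is terser but the logic is identical.
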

\begin{proof}
If $\mathbb{S}_{m m}=-1$, since $\mathbb{S}$ is unitary, then $\mathbb{S}_{\bullet m}=\mathbb{S}_{m\bullet}^{\top}=0$. In such a situation, according to (\ref{AugscatteringSolutions}), we have 
\begin{equation}\label{ExpanTrapped}
Z_m=w_m^{-}-w_m^++\tilde{Z}_m=i\sqrt{2}\,v_m^{-}+\tilde{Z}_m=i(2/\alpha_m)^{1/2}\,e^{-\alpha_m x}\cos(\pi m y)+\tilde{Z}_m.
\end{equation}
This shows that $Z_m\not\equiv0$ belongs to $\mH^1(\Om_+)$. In other words $Z_m$ is a trapped mode. 
\end{proof}
\begin{remark}
In our case, since we have considered a setting (geometry and spectral parameter) such that trapped modes of the form (\ref{DefTrappedMode}) exist, we know that precisely $\mathbb{S}_{m m}=-1$. In particular, comparing (\ref{DefTrappedMode}) and (\ref{ExpanTrapped}), we get
\begin{equation}\label{ExpanTrappedBis}
Z_m=iK^{-1}(2/\alpha_m)^{1/2}\,u_{\tr}.
\end{equation}
\end{remark}
\begin{remark}
Note that $\mathbb{S}_{m m}=-1$ is only a sufficient condition of existence of trapped modes. Indeed the geometry $\Om_+$ can support trapped modes for Problem (\ref{PbInitialHalf}) with $\mathbb{S}_{m m}\ne-1$. In this case, these trapped modes must decay as $O(e^{-\sqrt{\pi^2(m+1)^2-\lambda}x})$ when $x\to+\infty$.
\end{remark}

\section{Perturbation of the frequency and of the geometry}\label{SectionPerturbed}

\begin{figure}[!ht]
\centering
\begin{tikzpicture}[scale=1.5]
\draw[fill=gray!30,draw=none](0,2.5)--(0.5,2.5) arc (0:90:0.5);
\draw[fill=gray!30,draw=black](0.5,2.5) arc (0:90:0.5);
\draw[fill=gray!30,draw=none](0,1) rectangle (1,2.5);
\draw[fill=gray!30,draw=none](0,1) rectangle (3.5,2);
\draw[samples=30,domain=-0.25:0.25,draw=black,fill=gray!30] plot(\x+1.7,{2+0.05*(4*\x+1)^4*(4*\x-1)^4*(4*\x+3)});
\draw (0.5,2.5)--(1,2.5)--(1,2)--(1,2) --(3.5,2);
\draw (0,1)--(3.5,1); 
\draw[dashed] (4,1)--(3.5,1); 
\draw[dashed] (4,2)--(3.5,2);
\node at (2.8,1.15){\small $\Om_+^{\eps}$};
\draw[samples=30,domain=-0.25:0.25,draw=none,fill=gray!30] plot(\x+1.7,{1.99+0.05*(4*\x+1)^4*(4*\x-1)^4*(4*\x+3)});
\draw[samples=30,domain=-0.25:0.25,draw=none,fill=gray!30] plot(\x+1.7,{1.995+0.05*(4*\x+1)^4*(4*\x-1)^4*(4*\x+3)});
\node[above] at (2.35,2.05){\small $\gamma^{\eps}=(x,1+\eps H(x))$};
\draw (0,1)--(0,3);
\end{tikzpicture}
\caption{Example of perturbed half-waveguide $\Om_+^{\eps}$. \label{PerturbedHalfWaveguide}} 
\end{figure}

Now, we perturb slightly the original setting supporting trapped modes. More precisely, we make perturbations of amplitude $\eps\in\R$ (small) of the spectral parameter $\lambda^0$ and of the geometry which leads us to consider a family of problems depending on $\eps$. All the objects introduced in the previous section for $\eps=0$ can be defined analogously in the setting depending on $\eps$ and shall be denoted with the superscript $^{\eps}$. Our final goal is to understand the behaviour of the classical scattering matrix $\mathfrak{s}^{\eps}$ (defined as $\mathfrak{s}$ in (\ref{defScatteringMatrix})) as $\eps$ goes to zero. We shall use the following strategy: first we compute an asymptotic expansion of the augmented scattering matrix $\mathbb{S}^{\eps}$ (defined as $\mathbb{S}$ in (\ref{defScatteringMatrix})) as $\eps\to0$ (Section \ref{SectionPerturbed}); then we use Formula (\ref{FormulaScatteringToAugmented}) relating $\mathfrak{s}^{\eps}$ to $\mathbb{S}^{\eps}$ (Section \ref{SectionFano}). The leading idea behind this approach is that the augmented scattering matrix considered as a function of $(\eps,\lambda)$ is smooth at $(0,\lambda^0)$ (see \cite[Lem. 4]{Naza13}).

\subsection{Perturbed setting}\label{paragraphSetting}
In a first step, we explain how we perturb the original setting. The spectral parameter $\lambda^0$ is changed into
\begin{equation}\label{defPerturbFreq}
\lambda^{\eps}=\lambda^0+\eps\lambda'
\end{equation}
where $\lambda'\in\R$ is given and where $\eps>0$ is small. To describe the modification of the geometry $\Om_+$ into $\Om_+^{\eps}$, consider $\gamma\subset\partial\Om_+\setminus\overline{\Upsilon}$ a smooth arc. In a neighbourhood $\mathscr{V}$ of $\gamma$, we introduce natural curvilinear coordinates $(n,s)$ where $n$ is the oriented distance to $\gamma$ such that $n>0$ outside $\Om_+$ and $s$ is the arc length on $\gamma$. Additionally, let $H\in\mathscr{C}^{\infty}_0(\gamma)$ be a smooth profile function which vanishes in a neighbourhood of the two endpoints of $\gamma$. Outside $\mathscr{V}$, we assume that $\partial\Om^{\eps}_+$ coincides with $\partial\Om_+$ and inside $\mathscr{V}$,  $\partial\Om^{\eps}_+$ is defined by the equation
\begin{equation}\label{DefGeom}
n(s)=\eps H(s).
\end{equation}
In other words if $\gamma$ is parametrized as $\gamma=\{P(s)\in\R^2\,|\,s\in I\}$ where $I$ is a given interval of $\R$, then $\gamma^{\eps}:=\{P(s)+\eps H(s)\nu(s)\,|\,s\in I\}$. Here $\nu(s)$ is the normal derivative to $\gamma$ at point $P(s)$ directed to the exterior of $\Om_+$. Thus finally we consider the perturbed problem 
\begin{equation}\label{PbInitialHalfPerturbed}
\begin{array}{|rcll}
\Delta u^{\eps} + \lambda^{\eps} u^{\eps} & = & 0 & \mbox{ in }\Om^{\eps}_+\\[3pt]
 \partial_{\nu^{\eps}}u^{\eps}  & = & 0  & \mbox{ on }\partial\Om^{\eps}_+\setminus\overline{\Upsilon}\\[3pt]
\mrm{ABC}(u^{\eps}) & = & 0  & \mbox{ on }\Upsilon
\end{array}
\end{equation}
where $\nu^{\eps}$ stands for the normal unit vector to $\partial\Om^{\eps}_+$ directed to the exterior of $\Om^{\eps}_+$. Problem (\ref{PbInitialHalfPerturbed}) is the same as Problem (\ref{PbInitialHalf}) in the perturbed setting. Therefore, we could use the notation $(\ref{PbInitialHalfPerturbed})=(\ref{PbInitialHalf})^{\eps}$.
\subsection{Formal expansion}
Following the strategy presented in the introduction of this section, now we compute an asymptotic expansion of the augmented scattering matrix $\mathbb{S}^{\eps}$ as $\eps\to0$. The final results at the end of the procedure are summarized in Proposition \ref{PropositionAsymptoExpan}. For the functions $Z^{\eps}_j$, $j=0,\dots,m$ and the matrix $\mathbb{S}^{\eps}$, we make the following \textit{ans\"{a}tze} 
\begin{eqnarray}
\label{ExpansionScatteringSolutions}Z_j^{\eps}&=&Z_j^{0}+\eps Z_j'+\eps^2Z_j''+\dots\\[3pt]
\label{ExpansionAugScatteringMatrix}\mathbb{S}^{\eps}&=&\mathbb{S}^{0}+\eps\mathbb{S}'+\eps^2\mathbb{S}''+\dots,
\end{eqnarray}
where the dots stand for higher order terms which are not important for our analysis. The next step consists in identifying the terms $Z_j^{0}$, $ Z_j'$, $ Z_j''$ and $\mathbb{S}^{0}$, $\mathbb{S}'$, $\mathbb{S}''$ in the above expansions. To proceed, first we expand the differential operator appearing in the boundary condition of Problem (\ref{PbInitialHalfPerturbed}). We work in the local basis $(\nu(s),\tau(s))$ where $\tau(s)$ is the unit vector orthonormal to $\nu(s)$ such that $(\nu(s),\tau(s))$ is direct. If we denote $\kappa^{\eps}$ the curvature of $\gamma^{\eps}$, which is of order $O(1)$, using that
\[
\begin{array}{rcl}
\nabla\cdot|_{P(s)+n\nu(s)} &=&(\partial_{n}\cdot,(1+n\kappa^{\eps}(s))^{-1}\partial_s\cdot)|_{P(s)+n\nu(s)}\\
\nu^{\eps}(s)&=&(1+\eps^2(1+\eps H(s)\kappa^{\eps}(s))^{-2}|\partial_sH(s)|^2)^{-1/2}\left(1,-\cfrac{\eps \partial_sH(s)}{1+\eps H(s)\kappa^{\eps}(s)}\right),
\end{array}
\]
(here $\nu^{\eps}(s)$ denotes the outward unit normal to $\gamma^{\eps}$ at point $P(s)+\eps H(s)\nu(s)$), we obtain
\[
\partial_{\nu^{\eps}} = (1+\eps^2(1+\eps H\kappa^{\eps}(s))^{-2}|\partial_sH(s)|^2)^{-1/2}(\partial_{n}-\cfrac{\eps \partial_sH(s)}{(1+\eps H\kappa^{\eps}(s))^2}\,\partial_s)=\partial_{n}-\eps \partial_sH(s)\partial_s+\dots
\]
Thus on $\gamma^{\eps}$, plugging (\ref{ExpansionScatteringSolutions}) in the latter expansion, observing that $\partial_n=\partial_{\nu}$ and using that $\partial_{n}Z_j^{0}(\eps H(s),s)=\partial_{n}Z_j^{0}(0,s)+\eps H(s)\partial^2_{n}Z_j^{0}(0,s)+\dots$, we find
\[
\partial_{\nu^{\eps}}Z_j^{\eps}(\eps H(s),s)=\partial_{\nu}Z_j^0(0,s)+\eps\,(\partial_{\nu}Z'_j(0,s)+H(s)\partial^2_nZ^0_j(0,s)-\partial_sH(s)\partial_sZ^0_j(0,s))+\dots.
\]
Finally, plugging (\ref{ExpansionScatteringSolutions}) in Problem (\ref{PbInitialHalfPerturbed}), making $\eps\to0$ and identifying the powers in $\eps$, we first obtain at order $\eps^0$ that $Z_j^{0}$ must be a solution of the problem
\begin{equation}\label{PbInitialHalf0order}
\begin{array}{|rcll}
\Delta Z_j^{0} + \lambda^0 Z_j^{0} & = & 0 & \mbox{ in }\Om_+\\[3pt]
 \partial_{\nu}Z_j^{0}  & = & 0  & \mbox{ on }\partial\Om_+\setminus\overline{\Upsilon}\\[3pt]
\mrm{ABC}(Z_j^{0}) & = & 0  & \mbox{ on }\Upsilon.
\end{array}
\end{equation}
At order $\eps$, for the corrector $Z_j'$, using that $\partial_{n}^2Z^0_j(0,s)=-\partial_s^2Z^0_j(0,s)-\lambda^0 Z^0_j(0,s)$ (because $\partial_{n}Z^0_j(0,s)=0$), we obtain the problem
\begin{equation}\label{PbInitialHalf1order}
\begin{array}{|rcll}
\Delta Z_j'+\lambda^0 Z_j' & = & -\lambda'Z_j^0 & \mbox{ in }\Om_+\\[3pt]
\partial_{\nu}Z_j'(0,s)&=&\partial_s (H(s)\partial_s Z^0_j(0,s))+\lambda^0 H(s)Z^0_j(0,s) & \mbox{ }s\in I\\[3pt]
\partial_{\nu}Z_j'&=&0& \mbox{ on }\partial\Om_+\setminus(\Upsilon\cup\gamma)\\[3pt]
\mrm{ABC}(Z_j') & = & 0  & \mbox{ on }\Upsilon.
\end{array}
\end{equation}
In order to ``close'' systems of equations (\ref{PbInitialHalf0order}), (\ref{PbInitialHalf1order}) to guarantee existence and uniqueness of the solutions, we have to prescribe behaviours for $Z^0_j$, $Z'_j$ as $x\to+\infty$. This will be done in \S\ref{paragraphAtInfinity} studying the expansion of the modes $w_k^{\eps\pm}$ as $\eps\to0$.

\subsection{Behaviours at infinity}\label{paragraphAtInfinity}

In order to compute the behaviours of the modes as $x\to+\infty$ for small $\eps$, we start from the definitions of Section \ref{SectionSetting}. From Formula (\ref{DefModes1}), for $j=0,\dots,m-1$, we have
\[
\alpha^{\eps}_j=\sqrt{\lambda^0+\eps\,\lambda'-\pi^2j^2}=\alpha_j(1+\cfrac{\eps}{2}\,\cfrac{\lambda'}{\alpha_j^2}+O(\eps^2)),\qquad (a^{\eps}_j)^{-1/2}=(a_j)^{-1/2}(1-\cfrac{\eps}{4}\,\cfrac{\lambda'}{\alpha_j^2}+O(\eps^2)).
\]
We deduce 
\begin{equation}\label{expansionModej}
w_j^{\eps\pm}(x,y)=(a^{\eps}_j)^{-1/2}\,e^{\pm i\alpha^{\eps}_j x}\cos(\pi jy)=w_j^{\pm}(x,y)(1+O(\eps(1+|x|))),\quad j=0,\dots,m-1.
\end{equation}
Then we focus our attention on the terms $w_m^{\eps\pm}$. Formula (\ref{DefModes2}) gives
\[
\alpha^{\eps}_m=\sqrt{\pi^2m^2-\lambda^0-\eps\,\lambda'}=\alpha_m(1-\cfrac{\eps}{2}\,\cfrac{\lambda'}{\alpha_m^2}+O(\eps^2)),\qquad (a^{\eps}_m)^{-1/2}=(a_m)^{-1/2}(1+\cfrac{\eps}{4}\,\cfrac{\lambda'}{\alpha_m^2}+O(\eps^2)).
\]
This allows one to write
\begin{equation}\label{expansionModev}
v^{\eps\pm}_m(x,y)=(a^{\eps}_m)^{-1/2}e^{\pm\alpha^{\eps}_mx}\cos(\pi my)=v^{\pm}_m(x,y)(1+\cfrac{\eps}{4}\,\cfrac{\lambda'}{\alpha_m^2}(1\mp2\alpha_mx)+O(\eps^2(1+x^2))).
\end{equation}

\subsection{Definition and computation of the terms in the expansions}
Gathering (\ref{PbInitialHalf0order}) and expansions (\ref{expansionModej}), (\ref{expansionModev}) at order $\eps^0$, we deduce that the functions $Z_j^{0}$ appearing in the \textit{ans\"{a}tze} (\ref{ExpansionScatteringSolutions}) for $Z_j^{\eps}$ must coincide with the $Z_j$ introduced in (\ref{AugscatteringSolutions}), $j=0,\dots,m-1$. As a consequence, in the \textit{ans\"{a}tze} $\mathbb{S}^{\eps}=\mathbb{S}^{0}+\eps\mathbb{S}'+\eps^2\mathbb{S}''+\dots$, we must take $\mathbb{S}^0=\mathbb{S}$ where $\mathbb{S}$ is defined in (\ref{defScatteringMatrix}).\\
\newline
Then, we work at order $\eps$. We focus our attention on the term $Z^{\eps}_m$, this will be enough for our needs. For $\eps>0$, we have the formula
\[
Z^{\eps}_m=w_m^{\eps-}+\dsp\sum_{k=0}^{m}S^{\eps}_{km}w_{k}^{\eps+}+\tilde{Z}^{\eps}_m.
\]
Using that $\mathbb{S}^{\eps}=\mathbb{S}^{0}+\eps\mathbb{S}'+\eps^2\mathbb{S}''+\dots$ with $\mathbb{S}^0=\mathbb{S}$ (and in particular $\mathbb{S}_{m m}=-1$, $\mathbb{S}_{\bullet m}=0$), we get 
\begin{equation}\label{RelInter}
Z^{\eps}_m=w_m^{\eps-}-w_m^{\eps+}+\eps\dsp\sum_{k=0}^{m}S'_{km}w_{k}^{\eps+}+\tilde{Z}^{\eps}_m+O(\eps^2).
\end{equation}
But from $w_m^{\eps\pm}=(2)^{-1/2}\,(v_m^{\eps+}\mp iv_m^{\eps-})$, we infer that $w_m^{\eps-}-w_m^{\eps+}=\sqrt{2} i v_m^{\eps-}$. Plugging the latter relation in (\ref{RelInter}), using (\ref{expansionModev}) and comparing with the expansion $Z^{\eps}_m=Z^{0}_m+\eps Z'_m+\dots$, we obtain 
\begin{equation}\label{ExpansionCorrector}
Z'_m(x,y)=\cfrac{\sqrt{2}i}{4}\,\cfrac{\lambda'}{\alpha_m^2}(1+2\alpha_mx)v^{-}_m(x,y)+\dsp\sum_{k=0}^{m}S'_{km}w_{k}^{+}(x,y)+\tilde{Z}'_m(x,y).
\end{equation}
One can check that Problem (\ref{PbInitialHalf1order}) admits a solution of the form (\ref{ExpansionCorrector}). In the following, we will need to assess more precisely the value of the terms $\mathbb{S}'_{mm}:=S'_{mm}$, $\mathbb{S}'_{\bullet m}:=(S'_{0 m},\dots,S'_{m-1 m})^{\top}$.\\
\newline 
\noindent$\star$ We start with $\mathbb{S}'_{mm}$. Because of the presence of the linear term in (\ref{ExpansionCorrector}), we dot not work with the symplectic form $q(\cdot,\cdot)$. Instead we multiply (\ref{PbInitialHalf1order}) for $j=m$ by $\overline{Z^0_m}=\overline{Z_m}$ and integrate by parts  in the domain $\Om_{+,\,R}:=\{(x,y)\in\Om_+\,|\,|x|<R\}$. Since $Z_m$ is a solution of (\ref{PbInitialHalf0order}) which is exponentially decaying as $x\to+\infty$, taking the limit $R\to+\infty$, we obtain
\begin{equation}\label{IntParParts}
\begin{array}{ll}
&\lambda'\dsp\int_{\Om_+}|Z_m(x,y)|^2\,dxdy+\int_{I}(\partial_s(H(s)\partial_sZ_m(0,s))+\lambda^0 H(s)Z_m(0,s))\overline{Z_m(0,s)}\,ds\\[10pt]
=&-\dsp\lim_{R\to+\infty}\int_{0}^1\partial_x Z'_m(R,y)\overline{Z_m(R,y)}- Z'_m(R,y)\overline{\partial_xZ_m(R,y)}\,dy.
\end{array}
\end{equation}
Then we compute the right hand side of (\ref{IntParParts}) thanks to (\ref{ExpansionCorrector}) and using the identities $Z_m(x,y)=i\sqrt{2}\,v_m^-+\tilde{Z}_m=iK^{-1}\sqrt{2/\alpha_m}u_{\tr}$ (see (\ref{ExpanTrappedBis})) as well as $\|u_{\tr}\|_{\mL^2(\Om_+)}=1$, we find
\begin{equation}\label{defTermCorrectorScamm}
\mathbb{S}'_{mm}=-2i\alpha_m^{-1}K^{-2}(\lambda'-\ell_m(H))\quad\mbox{ with }\ell_m(H)=\int_{I}H(s)(|\partial_su_{\tr}(0,s)|^2-\lambda^0\,|u_{\tr}(0,s)|^2)\,ds.
\end{equation}
$\star$ Now, we focus our attention on the term $\mathbb{S}'_{\bullet m}$. Subtracting $c\,u_{\tr}$ where $c$ is a constant, we can assume that the functions $\zeta_j$ defined in (\ref{scatteringSolutions}) admit the expansion
\[
\zeta_j=w_j^{-}+\dsp\sum_{k=0}^{m-1}s_{kj}\,w_k^{+}+0\,v^{-}_m+\tilde{\zeta}_j,\qquad j=0,\dots,m-1,
\]
where this time $\tilde{\zeta}_j$ decays as $O(e^{-\sqrt{\pi^2(m+1)^2-\lambda^0}x})$ when $x\to+\infty$. Multiplying (\ref{PbInitialHalf1order}) by $\overline{\zeta}=(\overline{\zeta_0},\dots,\overline{\zeta_{m-1}})$ and using row notation leads to 
\[
\begin{array}{ll}
&\lambda'(Z_m,\zeta)_{\Om_+}+\dsp\int_{I}(\partial_s(H(s)\partial_sZ_m(0,s))+\lambda^0 H(s)Z_m(0,s))\overline{\zeta(0,s)}\,ds\\[4pt]
=&-\dsp\lim_{R\to+\infty}\int_{0}^1\partial_x Z'_m(R,y)\overline{\zeta(R,y)}- Z'_m(R,y) \partial_x \overline{\zeta(R,y)}\,dy=-i\,\mathbb{S}'_{m\bullet}\,\overline{\mathfrak{s}},
\end{array}
\]
where $(\cdot,\cdot)_{\Om_+}$ is the usual inner product (sesquilinear) of $\mL^2(\Om_+)$. Since $\mathfrak{s}$ is unitary, we deduce 
\begin{equation}\label{correctorTrans1}
\mathbb{S}'_{m\bullet}=-\sqrt{\cfrac{2}{\alpha_m}}\,K^{-1}(\lambda't-\ell_{\bullet}(H))\,\mathfrak{s}
\end{equation}
with
\begin{equation}\label{DefScndAssumption}
t=\int_{\Om_+}\hspace{-0.2cm}u_{\tr}(x,y)\overline{\zeta(x,y)}\,dxdy\ \mbox{ and }\ \ell_{\bullet}(H)=\int_{I}H(s)(\partial_su_{\tr}(0,s)\overline{\partial_s\zeta(0,s)}-\lambda^0\,u_{\tr}(0,s)\overline{\zeta(0,s)})\,ds.
\end{equation}
Note that due to the symmetry of $\mathbb{S}^{\eps}$, we have $\mathbb{S}'_{\bullet m}=(\mathbb{S}'_{m\bullet})^{\top}$. We summarize the results we have obtained in the proposition below. Its proof can be established following the approach leading to \cite[Thm. 9]{Naza13}. It relies on the use of weighted spaces with detached asymptotics (see \cite{Naza99a,Naza99b,Naza11} and others). In the process, it is necessary to rectify the boundary using ``almost identical'' diffeomorphisms to transform the perturbed waveguide $\Om^{\eps}_+$ into the original geometry $\Om_+$. Then one can use classical approaches to deal with perturbations of linear operators (see e.g. \cite[Chap. 7, \S6.5]{Kato95} or \cite[Chap. 4]{HiPh57}).

\begin{proposition}\label{PropositionAsymptoExpan}
There exists $\eps_0>0$ and $\mathbb{S}''_{mm}\in\Cplx$ such that for all $\eps\in(0;\eps_0]$, 
\begin{equation}\label{ErrorEstimates}
|\mathbb{S}^{\eps}_{mm}-(-1+\eps\mathbb{S}'_{mm}+\eps^2\mathbb{S}''_{mm})| \le C\,\eps^3,\qquad  |\mathbb{S}^{\eps}_{m\bullet}-\eps\mathbb{S}'_{m\bullet}| \le C\,\eps^2\quad\mbox{ and }\quad|\mathbb{S}^{\eps}_{\bullet\bullet}-\mathfrak{s}| \le C\,\eps.
\end{equation}
In (\ref{ErrorEstimates}), $\mathfrak{s}$ is the usual scattering matrix for $\lambda=\lambda^0$ while $\mathbb{S}'_{mm}$, $\mathbb{S}'_{\bullet m}$ are respectively defined in (\ref{defTermCorrectorScamm}), (\ref{correctorTrans1}). The constant $C>0$ can be chosen independently of $\lambda'$ (see (\ref{defPerturbFreq})) in a compact interval of $\R$. 
\end{proposition}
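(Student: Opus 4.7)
The plan is to transform the perturbed Problem (\ref{PbInitialHalfPerturbed}) into a problem posed on the fixed reference domain $\Om_+$ and then apply the framework of weighted spaces with detached asymptotics developed in \cite{Naza99a,Naza99b,Naza11,Naza13}. First I would introduce a near-identity diffeomorphism $\Phi^{\eps}:\Om_+\to\Om^{\eps}_+$, supported in a tubular neighbourhood of $\gamma$ and equal to $\mrm{Id}+\eps H(s)\chi(n)\bfnu(s)$ there for an appropriate cut-off $\chi$. Pulling back by $\Phi^{\eps}$ transforms (\ref{PbInitialHalfPerturbed}) into a problem on $\Om_+$ whose interior and boundary operators admit convergent expansions $\Delta+\lambda^0+\eps\mathcal{L}'+\eps^2\mathcal{L}''+\dots$ and $\partial_{\nu}+\eps\mathcal{B}'+\dots$ in powers of $\eps$. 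The formal identities derived in \S\ref{paragraphAtInfinity} and in the paragraph that follows are exactly the Taylor coefficients of this parameter-dependent family applied to the augmented scattering ansatz.

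Second, I would cast the construction of the augmented scattering solutions and of $\mathbb{S}^{\eps}$ as an operator equation in a Banach space independent of $\eps$. Following \cite{Naza11,Naza13}, one seeks $Z^{\eps}_j$ of the form $w_j^{\eps-}+\sum_{k=0}^m c^{\eps}_{kj}\,w_k^{\eps+}+\tilde{Z}^{\eps}_j$ with $\tilde{Z}^{\eps}_j$ in an exponentially weighted space $\mH^1_{\beta}(\Om_+)$, weight $e^{\beta x}$ for some $0<\beta<\sqrt{\pi^2(m+1)^2-\lambda^0}$. The unknowns $X^{\eps}_j:=(c^{\eps}_{\bullet j},\tilde{Z}^{\eps}_j)$ then solve a linear system $\mathcal{A}^{\eps}X^{\eps}_j=F^{\eps}_j$ in a fixed Banach space. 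The crucial point, recalled from \cite[Lem.~4]{Naza13}, is that $\mathcal{A}^0$ is an isomorphism even though $\lambda^0$ is an embedded eigenvalue: enlarging the outgoing basis with the wave packet $w_m^{\pm}$ removes the trapped-mode obstruction because, by Lemma \ref{LemmaExistenceTrappedMode}, an element of $\ker\mathcal{A}^0$ would produce a trapped mode with $K=0$ in (\ref{DefTrappedMode}), contradicting the standing assumptions $K\ne 0$ and simplicity of $\lambda^0$.

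Third, since $\eps\mapsto\mathcal{A}^{\eps}$ and $\eps\mapsto F^{\eps}_j$ are analytic on a small disk with values in the relevant spaces, a Neumann series for $(\mathcal{A}^{\eps})^{-1}$ yields $X^{\eps}_j=X^0_j+\eps X'_j+\eps^2 X''_j+O(\eps^3)$ in the domain norm, and reading off the coefficients of $w_k^{\eps+}$ produces the corresponding expansion of $\mathbb{S}^{\eps}$. Matching with the formal computations already carried out identifies $\mathbb{S}'_{mm}$ and $\mathbb{S}'_{m\bullet}$ with the expressions (\ref{defTermCorrectorScamm}), (\ref{correctorTrans1}); the coefficient $\mathbb{S}''_{mm}$ arises by continuing the matching one step further, though its explicit form plays no role in the sequel. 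The three different orders in (\ref{ErrorEstimates}) simply reflect truncating each block of $\mathbb{S}^{\eps}$ at the minimum depth needed for the Fano analysis in Section \ref{SectionFano}: the full expansion is smooth to all orders.

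The main obstacle is obtaining $\eps_0$ and $C$ independent of $\lambda'$ on a compact interval. This follows because $\lambda'$ enters $\mathcal{A}^{\eps}$ only through the zeroth-order term $\eps\lambda'$ in the Helmholtz operator and through the analytic dependence of $\alpha^{\eps}_j$, $a^{\eps}_j$ on $\lambda^{\eps}$, so $\|\mathcal{A}^{\eps}-\mathcal{A}^0\|\le C\eps(1+|\lambda'|)$ uniformly in $\lambda'$; the Neumann series for $(\mathcal{A}^{\eps})^{-1}$ then converges on a disk $|\eps|<\eps_0$ with $\eps_0$ uniform over any compact set of $\lambda'$, and standard perturbation theory (cf.\ \cite[Chap.~7, \S6.5]{Kato95} or \cite[Chap.~4]{HiPh57}) delivers the quantitative estimates. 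A secondary technical point is to check that the pullback by $\Phi^{\eps}$ preserves the exponential weights used for $\tilde{Z}^{\eps}_j$, which is immediate since $\Phi^{\eps}$ equals the identity outside a compact neighbourhood of $\gamma$.
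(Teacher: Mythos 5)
Your proposal is correct and follows essentially the route the paper itself indicates for Proposition \ref{PropositionAsymptoExpan}: rectification of $\Om^{\eps}_+$ by an almost-identical diffeomorphism, reformulation in weighted spaces with detached asymptotics in which the limit operator is invertible despite the embedded eigenvalue (the augmented radiation condition forces any kernel element to be a fast-decaying trapped mode, excluded by simplicity of $\lambda^0$ and $K\ne0$), and classical perturbation theory as in \cite[Thm. 9]{Naza13} and \cite[Chap.~7, \S6.5]{Kato95}. The only nitpick is that injectivity of the limit operator comes from the symplectic-form computation $q(u,u)=0$ (which kills the coefficients of all the $w_k^{+}$, $k=0,\dots,m$) rather than from Lemma \ref{LemmaExistenceTrappedMode} itself, but the idea you invoke is the intended one.
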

\begin{remark}
We do not compute the value of $\mathbb{S}''_{mm}$ in (\ref{ErrorEstimates}) because we will not need it. However this can be done following the approach above. 
\end{remark}

\section{The Fano resonance}\label{SectionFano}
Now we gather the asymptotic formulas derived for $\mathbb{S}^{\eps}$ in the previous section to analyse the behaviour of $\mathfrak{s}^{\eps}$ as $\eps\to0$ and to explain the Fano resonance. We plug the expansion $\mathbb{S}^{\eps}=\mathbb{S}+\eps\mathbb{S}'+\eps^2\mathbb{S}''+\dots$ in the key formula 
\[
\mathfrak{s}^{\eps}=\mathbb{S}^{\eps}_{\bullet\bullet}-\mathbb{S}^{\eps}_{\bullet m}(1+\mathbb{S}^{\eps}_{m m})^{-1}\mathbb{S}^{\eps}_{m\bullet}
\]
(see (\ref{FormulaScatteringToAugmented})). Using estimate (\ref{ErrorEstimates}), we obtain
\begin{equation}\label{AsymptoticUsualSca}
\mathfrak{s}^{\eps}=\mathfrak{s}+O(\eps)-\eps(\mathbb{S}'_{mm}+\eps\mathbb{S}''_{mm}+O(\eps^2))^{-1}(\mathbb{S}'_{\bullet m}\mathbb{S}'_{m\bullet}+O(\eps)).
\end{equation}
From (\ref{AsymptoticUsualSca}), we see that if $\mathbb{S}'_{mm}\ne0$, which is equivalent to $\lambda'\ne\ell_m(H)$ according to (\ref{defTermCorrectorScamm}), then we have 
\begin{equation}\label{relationAnalysis1}
\mathfrak{s}^{\eps}=\mathfrak{s}+O(\eps).
\end{equation}
In order to express the dependence of the scattering matrix with respect to both $\eps$ and $\lambda$, we define the map $\mathsf{s}:\R^2\to\Cplx^{m\times m}$ such that $\mathsf{s}(\eps,\lambda^{\eps})=\mathfrak{s}^{\eps}$. With such a notation, we have $\mathsf{s}(0,\lambda^0)=\mathfrak{s}$. Relation (\ref{relationAnalysis1}) gives
\[
\lim_{\eps\to0}\ \mathsf{s}(\eps,\lambda^0+\eps\lambda')=\mathfrak{s}\qquad\mbox{ for }\lambda'\ne\ell_m(H).
\]
Now we consider the case $\mathbb{S}'_{mm}=0\Leftrightarrow\lambda'=\ell_m(H)$. Note that (\ref{defTermCorrectorScamm}) guarantees that $\ell_m(H)$ is real. More generally, we assume that $\lambda'=\ell_m(H)+\eps\mu$ with $\mu\in\R$, so that $\lambda^{\eps}=\lambda^0+\eps\ell_m(H)+\eps^2\mu$. Observe the analogy with what has been done in Section \ref{SectionToyPb} (see the discussion after (\ref{R2variables})). Then, using (\ref{defTermCorrectorScamm}) in (\ref{AsymptoticUsualSca}), we obtain the new formula for the usual scattering matrix
\begin{equation}\label{MainResultAsymptotic}
\mathfrak{s}^{\eps}=\mathfrak{s}+\cfrac{\mathbb{S}'_{\bullet m}\mathbb{S}'_{m\bullet}}{2i\alpha_m^{-1}K^{-2}\mu-\mathbb{S}''_{mm}}+O(\eps).
\end{equation}
With the above notation, this implies
\begin{equation}\label{MainResultAsymptoticFormulation2}
\lim_{\eps\to0}\ \mathsf{s}(\eps,\lambda^0+\eps\ell_m(H)+\eps^2\mu)=\mathfrak{s}+\cfrac{\mathbb{S}'_{\bullet m}\mathbb{S}'_{m\bullet}}{2i\alpha_m^{-1}K^{-2}\mu-\mathbb{S}''_{mm}}.
\end{equation}
Note that to derive expansion (\ref{MainResultAsymptotic}), we need to guarantee that the denominator does not vanish. This is true for all $\mu\in\R$ as soon as $\Re e\,\mathbb{S}''_{mm}$ is not null. Let us obtain a necessary and sufficient condition so that this holds. Due to the fact that $\mathbb{S}^{\eps}$ is unitary, we have $1=|\mathbb{S}^{\eps}_{mm}|^2+|\mathbb{S}^{\eps}_{m\bullet}|^2$. Plugging the expansions 
\[
\mathbb{S}^{\eps}_{mm}=-1+\eps\mathbb{S}_{mm}'+\eps^2\mathbb{S}''_{mm}+\dots\qquad\qquad \mathbb{S}^{\eps}_{m\bullet}=\eps\mathbb{S}_{m\bullet}'+\eps^2\mathbb{S}''_{m\bullet}+\dots
\]
in this identity and using that $\mathbb{S}_{mm}'$ is purely imaginary (see again (\ref{defTermCorrectorScamm})), we find
\[
1=|\mathbb{S}^{\eps}_{mm}|^2+|\mathbb{S}^{\eps}_{m\bullet}|^2=1+\eps^2(-2\Re e\,\mathbb{S}''_{mm}+|\Im m\,\mathbb{S}'_{mm}|^2+|\mathbb{S}'_{m\bullet}|^2)+O(\eps^3).
\]
In particular, when $\lambda'=\ell_m(H)+O(\eps)$, using formulas (\ref{defTermCorrectorScamm}) and (\ref{correctorTrans1}), we deduce 
\begin{equation}\label{RealPartScndCorrector}
\Re e\,\mathbb{S}''_{mm}=|\mathbb{S}'_{m\bullet}|^2/2= \alpha_m^{-1}K^{-2}\,|\ell_{\bullet}(H)-\ell_m(H)t|^2.
\end{equation}
Thus, we see that (\ref{MainResultAsymptotic}) (and so (\ref{MainResultAsymptoticFormulation2})) is valid when the profile function $H$ is such that
\begin{equation}\label{AssumptionCoeffs}
\ell_{\bullet}(H)\ne\ell_{m}(H)t\in\mathbb{C}^{m}.
\end{equation}
Note that according to (\ref{RealPartScndCorrector}), the latter assumption also guarantees that $\mathbb{S}'_{\bullet m}\ne0$ and $\mathbb{S}'_{m\bullet}\ne0$. We gather these results in the following theorem, the main outcome of the article. 
\begin{theorem}\label{MainThmAsympto}
Let $\mathsf{s}(\eps,\lambda)$ stand for the usual scattering matrix  (\ref{defScatteringMatrix}) for the problem (\ref{PbInitialHalf}) in $\Om^{\eps}$.\\
$\star$ Assume that $\lambda'\ne\ell_m(H)$ where $\ell_m(H)$ is defined in (\ref{defTermCorrectorScamm}). Then we have
\[
\lim_{\eps\to0}\ \mathsf{s}(\eps,\lambda^0+\eps\lambda')=\mathfrak{s}.
\]
$\star$ Assume that $H$ is such that $\ell_{\bullet}(H)\ne\ell_{m}(H)t\in\mathbb{C}^{m}$ where $\ell_{\bullet}(H)$ and $t$ are defined in (\ref{DefScndAssumption}). Then we have
\[
\lim_{\eps\to0}\ \mathsf{s}(\eps,\lambda^0+\eps\ell_m(H)+\eps^2\mu)=\mathfrak{s}+\cfrac{\mathbb{S}'_{\bullet m}\mathbb{S}'_{m\bullet}}{2i\alpha_m^{-1}K^{-2}\mu-\mathbb{S}''_{mm}}.
\]
\end{theorem}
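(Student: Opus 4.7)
The strategy is to substitute the asymptotic expansions of Proposition \ref{PropositionAsymptoExpan} into the identity (\ref{FormulaScatteringToAugmented}),
\[
\mathfrak{s}^{\eps} = \mathbb{S}^{\eps}_{\bullet\bullet}-\mathbb{S}^{\eps}_{\bullet m}(1+\mathbb{S}^{\eps}_{m m})^{-1}\mathbb{S}^{\eps}_{m\bullet},
\]
and to track the dominant contributions in each of the two scalings. The leading-order formal calculations are already displayed in (\ref{AsymptoticUsualSca})--(\ref{MainResultAsymptotic}); the task is to promote them to genuine limits using the uniform-in-$\lambda'$ bounds in Proposition \ref{PropositionAsymptoExpan}.

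\textbf{Regular regime} ($\lambda'\ne\ell_m(H)$). Formula (\ref{defTermCorrectorScamm}) gives $\mathbb{S}'_{mm}=-2i\alpha_m^{-1}K^{-2}(\lambda'-\ell_m(H))\ne0$, so (\ref{ErrorEstimates}) yields $1+\mathbb{S}^{\eps}_{mm}=\eps\mathbb{S}'_{mm}+O(\eps^2)$. This is of order $\eps$ and hence invertible for $\eps$ small, with inverse of order $\eps^{-1}$. Combined with the remaining estimates $\mathbb{S}^{\eps}_{\bullet m}=O(\eps)$, $\mathbb{S}^{\eps}_{m\bullet}=O(\eps)$, $\mathbb{S}^{\eps}_{\bullet\bullet}=\mathfrak{s}+O(\eps)$, the correction term in (\ref{FormulaScatteringToAugmented}) is $O(\eps)\cdot O(\eps^{-1})\cdot O(\eps)=O(\eps)$, whence $\mathfrak{s}^{\eps}=\mathfrak{s}+O(\eps)$. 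This proves the first limit.

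\textbf{Fano regime} ($\lambda'=\ell_m(H)+\eps\mu$). The essential observation is that the prefactor $\mathbb{S}'_{mm}$ itself becomes small: evaluating (\ref{defTermCorrectorScamm}) at $\lambda'=\ell_m(H)+\eps\mu$ gives $\mathbb{S}'_{mm}=-2i\alpha_m^{-1}K^{-2}\eps\mu$, pushing $\eps\mathbb{S}'_{mm}$ down to order $\eps^2$. The uniformity of the constants in (\ref{ErrorEstimates}) for $\lambda'$ in a compact neighbourhood of $\ell_m(H)$ legitimises this substitution; and the smooth dependence of $\mathbb{S}''_{mm}$ and $\mathbb{S}'_{m\bullet}$ on $\lambda'$ (visible in (\ref{defTermCorrectorScamm})--(\ref{DefScndAssumption})) means replacing them by their values at the critical parameter $\lambda'=\ell_m(H)$ costs only an additional $O(\eps)$. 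One thus obtains
\[
1+\mathbb{S}^{\eps}_{mm}=\eps^2\bigl(\mathbb{S}''_{mm}-2i\alpha_m^{-1}K^{-2}\mu\bigr)+O(\eps^3),\qquad \mathbb{S}^{\eps}_{\bullet m}=\eps\mathbb{S}'_{\bullet m}+O(\eps^2),
\]
with $\mathbb{S}''_{mm},\mathbb{S}'_{\bullet m}$ evaluated at $\lambda'=\ell_m(H)$. The unitarity identity (\ref{RealPartScndCorrector}) together with the assumption $\ell_{\bullet}(H)\ne\ell_m(H)t$ forces $\Re e\,\mathbb{S}''_{mm}>0$, so the denominator $\mathbb{S}''_{mm}-2i\alpha_m^{-1}K^{-2}\mu$ stays bounded away from zero for $\mu$ in any compact subset of $\R$. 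Inverting and plugging into (\ref{FormulaScatteringToAugmented}) gives $\mathfrak{s}^{\eps}=\mathfrak{s}+\mathbb{S}'_{\bullet m}\mathbb{S}'_{m\bullet}/(2i\alpha_m^{-1}K^{-2}\mu-\mathbb{S}''_{mm})+O(\eps)$, which is the second limit.

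\textbf{Main obstacle.} The only genuinely delicate step is the Fano regime: one must justify feeding an $\eps$-dependent $\lambda'$ into an asymptotic expansion in $\eps$ whose coefficients \emph{themselves} depend on $\lambda'$ without losing the error control. This is exactly where the uniformity clause of Proposition \ref{PropositionAsymptoExpan} (``$C>0$ can be chosen independently of $\lambda'$ in a compact interval'') is decisive, combined with the explicit, smooth dependence of $\mathbb{S}'_{mm},\mathbb{S}'_{m\bullet},\mathbb{S}''_{mm}$ on $\lambda'$. Once this substitution is licensed, the rest is a careful accounting of powers of $\eps$ in the numerator and denominator of (\ref{FormulaScatteringToAugmented}), and the non-vanishing of the denominator is read off from (\ref{RealPartScndCorrector}).
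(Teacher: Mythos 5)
Your proposal is correct and follows essentially the same route as the paper: substitute the expansion of Proposition \ref{PropositionAsymptoExpan} into (\ref{FormulaScatteringToAugmented}), treat the two scalings of $\lambda'$ separately, and use the unitarity identity (\ref{RealPartScndCorrector}) together with hypothesis (\ref{AssumptionCoeffs}) to keep the denominator away from zero (indeed, since $\Re e\,\mathbb{S}''_{mm}>0$ this holds for all $\mu\in\R$, not just compact subsets). Your explicit attention to the uniformity in $\lambda'$ when feeding $\lambda'=\ell_m(H)+\eps\mu$ back into the $\lambda'$-dependent coefficients is exactly the point the paper delegates to the uniformity clause of Proposition \ref{PropositionAsymptoExpan}.
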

\noindent Let us make some comments about this result. As for the map $\mathcal{R}(\cdot,\cdot)$ defined in (\ref{R2variables}) for the $\mrm{1D}$ toy problem (see Section \ref{SectionToyPb}), we see that $\mathsf{s}(\cdot,\cdot)$ is not continuous at the point $(0,\lambda^0)$. We have the same picture as in Figure \ref{FigFano1DParabola} left. The function $\mathsf{s}(\cdot,\cdot)$ valued on different parabolic paths $\{(\eps,\lambda^{0}+\eps\lambda'+\eps^2\mu),\,\eps\in(0;\eps_0)\}$ can have different limits as $\eps$ tends to zero. And for $\eps_0\ne0$ small fixed, the usual scattering matrix $\lambda\mapsto\mathsf{s}(\eps_0,\lambda)$ exhibits a quick change in a neighbourhood of $\lambda^0+\eps_0\ell_{m}(H)$. Indeed, the map $\mu\mapsto\mathsf{s}(\eps_0,\lambda^0+\eps_0\ell_m(H)+\eps_0^2\mu)$ has a large variation for $\mu\in[-C\eps_0^{-1};C\eps_0^{-1}]$ for some arbitrary $C>0$ (which is only a small change for $\lambda^{\eps_0}$). Said differently, a change of order $\eps$ of the frequency leads to a change of order one of the scattering matrix. This is the Fano resonance. For a given $C>0$, outside an interval of length $C\eps_0$ centered at $\lambda^0+\eps_0\ell_m(H)$, $\mathsf{s}(\eps_0,\cdot)$ is approximately equal to $\mathfrak{s}$. 
\begin{remark}
When $H$ is such that $\ell_{\bullet}(H)=\ell_{m}(H)t\in\mathbb{C}^{m}$, a fast Fano resonance phenomenon appears. More precisely, for a given $\eps_0\ne0$ small, the rapid variation of $\mathsf{s}(\eps_0,\cdot)$ occurs on a range of frequencies of length $O(\eps_0^2)$ (instead of $O(\eps_0)$ when $\ell_{\bullet}(H)\ne\ell_{m}(H)t\in\mathbb{C}^{m}$).
\end{remark}
\noindent For the monomode regime $m=1$, that is when only $w_{0}^{\pm}$, the piston modes, are propagating modes, the usual scattering matrix $\mathfrak{s}$ is just a complex number of modulus one (due to conservation of energy). In this case, we also have $\mathbb{S}'_{\bullet m}=\mathbb{S}'_{m\bullet }\in\mathbb{C}\setminus\{0\}$. From (\ref{MainResultAsymptotic}), (\ref{RealPartScndCorrector}), we find
\begin{equation}\label{eqn1DAnalysis}
\mathsf{s}(\eps,\lambda^0+\eps\ell_m(H)+\eps^2\mu)=\mathfrak{s}+\cfrac{(\mathbb{S}'_{\bullet m})^2}{i\tilde{\mu}-|\mathbb{S}'_{\bullet m}|^2/2}+O(\eps)\qquad\mbox{with }\ \tilde{\mu}:=2\alpha_m^{-1}K^{-2}\mu-\Im m\,\mathbb{S}''_{m m}.
\end{equation}
The fact that $\mathbb{S}^{\eps}$ is unitary (and symmetric) also implies that $\mathbb{S}_{\bullet\bullet}^{\eps}\overline{\mathbb{S}_{m\bullet}^{\eps}}+\mathbb{S}_{m\bullet}^{\eps}\overline{\mathbb{S}_{mm}^{\eps}}=0$. Plugging the expansions $\mathbb{S}^{\eps}_{\bullet\bullet}=\mathfrak{s}+O(\eps)$, $\mathbb{S}^{\eps}_{m\bullet}=\eps\mathbb{S}_{m\bullet}'+O(\eps^2)$ and $\mathbb{S}^{\eps}_{mm}=-1+O(\eps)$ in this identity, we get $\mathfrak{s}=(\mathbb{S}_{m\bullet}')^2/|\mathbb{S}_{m\bullet}'|^2$. Using the latter relation in (\ref{eqn1DAnalysis}) yields 
\begin{equation}\label{eqn1DAnalysisBis}
\mathsf{s}(\eps,\lambda^0+\eps\ell_m(H)+\eps^2\mu)=\mathfrak{s}\,\cfrac{2i\tilde{\mu}+|\mathbb{S}'_{\bullet m}|^2}{2i\tilde{\mu}-|\mathbb{S}'_{\bullet m}|^2}+O(\eps).
\end{equation}
Formula (\ref{eqn1DAnalysisBis}) is very similar to Expansion (\ref{Resultat1DMobius}) obtained in $\mrm{1D}$. Gathering (\ref{eqn1DAnalysis}) and (\ref{eqn1DAnalysisBis}), we deduce that asymptotically as $\eps$ goes to zero, when the frequency increases ($\mu$ and so $\tilde{\mu}$ varies from $-\infty$ to $+\infty$), the coefficient $\mathsf{s}(\eps,\cdot)$ runs exactly once on the unit circle counter-clockwise from $\mathfrak{s}$ to $\mathfrak{s}$. This is exactly what we found in $\mrm{1D}$ (see Figure \ref{FigFanoSpace1D}).

\section{Connection with non reflection and perfect reflection}\label{SectionRTNull}
We come back to the geometry which is unbounded both in left and right directions introduced in Section \ref{SectionSetting}. We consider again the monomode regime with $m=1$ (this is important in the following analysis) assuming that $\lambda^0\in(0;\pi^2)$. We remind the reader that $\lambda^0$ is a simple eigenvalue of the Laplace operator with homogeneous Neumann boundary conditions in $\Om$. To set ideas, we assume that the corresponding eigenfunctions are symmetric with respect to the $(Oy)$ axis so that there holds $\partial_xu_{\mrm{tr}}=0$ on $\Upsilon$. We denote $\Om^{\eps}$ the domain obtained from $\Om^{\eps}_+$ (see \S\ref{paragraphSetting}) by symmetrization: $\Om^{\eps}:=\{(\pm x,y)\in\R^2\,|\,(x,y)\in\Om^{\eps}_+\}\cup\Upsilon$. We consider the scattering of the incident piston mode $w^{-}_0$ propagating from $+\infty$ to $-\infty$ by the structure. This leads us to study the problem 
\begin{equation}\label{PbRTNull}
\begin{array}{|rcll}
\Delta u^{\eps} + \lambda u^{\eps} & = & 0 & \mbox{ in }\Om^{\eps}\\[3pt]
 \partial_{\nu^{\eps}}u^{\eps}  & = & 0  & \mbox{ on }\partial\Om^{\eps}\\[3pt]
\multicolumn{4}{|c}{ u^{\eps}-w^{-}_0  \mbox{ is outgoing}}
\end{array}
\end{equation}
with $\lambda\in(0;\pi^2)$. Here $\nu^{\eps}$ stands for the normal unit vector to $\partial\Om^{\eps}$ directed to the exterior of $\Om^{\eps}$. Problem (\ref{PbRTNull}) always admits a solution $u^{\eps}$ (defined up to $\mrm{span}(u_{\mrm{tr}})$ when $\lambda=\lambda^0$). We have the decomposition 
\begin{equation}\label{decomScattering}
u^{\eps}=w_0^{-}+\mathcal{R}(\eps,\lambda)\,w_0^{+}+\tilde{u}^{\eps}\mbox{ for }x\ge d,\qquad u^{\eps}=\mathcal{T}(\eps,\lambda)\,w_0^{-}+\tilde{u}^{\eps}\mbox{ for }x\le -d. 
\end{equation}
In (\ref{decomScattering}), $\tilde{u}^{\eps}$ decays as $O(e^{-\sqrt{\pi^2-\lambda}|x|})$ when $x\to\pm\infty$. Moreover $\mathcal{R}(\eps,\lambda)$ and $\mathcal{T}(\eps,\lambda)$ are the reflection and transmission coefficients. Due to conservation of energy, these complex numbers are related by the formula
\[
|\mathcal{R}(\cdot,\cdot)|^2+|\mathcal{T}(\cdot,\cdot)|^2=1.
\]
In this section, we explain how to use the Fano resonance to find settings (geometry and frequency) such that $\mathcal{R}(\cdot,\cdot)=0$ (non reflection) or $\mathcal{T}(\cdot,\cdot)=0$ (perfect reflection). Physically, when $\mathcal{R}(\cdot,\cdot)=0$ the energy is completely transmitted and the field reflected by the structure is null at infinity as in the reference (straight) geometry. In this situation, the defect in the geometry with respect to the reference setting is invisible for backscattering measurements. On the other hand, when $\mathcal{T}(\cdot,\cdot)=0$, all the energy propagated by the incident field is backscattered as if the waveguide were obstructed.\\
\newline
In order to take advantage of the symmetry of the geometry with respect to the $(Oy)$ axis, we introduce the two half-waveguide problems with different boundary conditions on $\Upsilon$
\begin{equation}\label{PbRTNullHalfWg}
(\mathscr{P}^{N})\ \begin{array}{|rcll}
\Delta v^{\eps} + \lambda v^{\eps} & = & 0 & \mbox{ in }\Om^{\eps}_+\\[3pt]
 \partial_{\nu^{\eps}}v^{\eps}  & = & 0  & \mbox{ on }\partial\Om^{\eps}_+\setminus\overline{\Upsilon}\\[3pt]
 \partial_{x}v^{\eps}  & = & 0  & \mbox{ on }\Upsilon\\[3pt]
\end{array}\qquad(\mathscr{P}^{D})\ \begin{array}{|rcll}
\Delta V^{\eps} + \lambda V^{\eps} & = & 0 & \mbox{ in }\Om^{\eps}_+\\[3pt]
 \partial_{\nu^{\eps}}V^{\eps}  & = & 0  & \mbox{ on }\partial\Om^{\eps}_+\setminus\overline{\Upsilon}\\[3pt]
V^{\eps}  & = & 0  & \mbox{ on }\Upsilon.
\end{array}
\end{equation}
Problems $(\mathscr{P}^{N})$ and $(\mathscr{P}^{D})$ admit respectively solutions which decompose as
\begin{eqnarray}
\label{decompoHalfN}v^{\eps}&=&w_0^{-}+\mathcal{R}^{N}(\eps,\lambda)\,w_0^{+}+\tilde{v}^{\eps}\\[3pt]
\label{decompoHalfD}V^{\eps}&=&w_0^{-}+\mathcal{R}^{D}(\eps,\lambda)\,w_0^{+}+\tilde{V}^{\eps}
\end{eqnarray}
where $\mathcal{R}^{N}(\eps,\lambda)$, $\mathcal{R}^{D}(\eps,\lambda)\in\Cplx$ and where $\tilde{v}^{\eps}$, $\tilde{V}^{\eps}$ decay as $O(e^{-\sqrt{\pi^2-\lambda}x})$ when $x\to+\infty$. Due to conservation of energy, there holds 
\begin{equation}\label{ConservationNRJHalf}
|\mathcal{R}^{N}(\cdot,\cdot)|=|\mathcal{R}^{D}(\cdot,\cdot)|=1.
\end{equation}
By uniqueness of the scattering coefficients in (\ref{decomScattering}),  (\ref{decompoHalfN}), (\ref{decompoHalfD}), we obtain $u^{\eps}(x,y)=(v^{\eps}(x,y)+V^{\eps}(x,y))/2$ in $\Om^{\eps}_+$ and $u^{\eps}(x,y)=(v^{\eps}(-x,y)-V^{\eps}(-x,y))/2$ in $\Om^{\eps}\setminus\overline{\Om^{\eps}_+}$ (up to possible trapped modes). This yields the formulas
\begin{equation}\label{HalfSumDif}
\mathcal{R}(\cdot,\cdot)=\cfrac{1}{2}\,(\,\mathcal{R}^N(\cdot,\cdot)+\mathcal{R}^D(\cdot,\cdot)\,)\qquad\mbox{ and }\qquad\mathcal{T}(\cdot,\cdot)=\cfrac{1}{2}\,(\,\mathcal{R}^N(\cdot,\cdot)-\mathcal{R}^D(\cdot,\cdot)\,).
\end{equation}
Since by assumption trapped modes do not exist for $(\mathscr{P}^{D})$ at $\lambda=\lambda^0$, we can prove that for $\eps>0$ small enough, we have for all $\mu\in[-c\eps^{-1};c\eps^{-1}]$ ($c>0$ is a fixed constant)
\[
|\mathcal{R}^D(\eps,\lambda^0+\eps\ell_m(H)+\eps^2\mu)-
\mathcal{R}^D(0,\lambda^0)|\le C\eps.
\]
On the other hand, according to (\ref{eqn1DAnalysisBis}), $\mu\mapsto\mathcal{R}^N(\eps,\lambda^0+\eps\ell_m(H)+\eps^2\mu)$ rushes along the unit circle for $\mu\in[-c\eps^{-1};c\eps^{-1}]$. As a consequence, if $\mathcal{R}^N(0,\lambda^0)\ne-\mathcal{R}^D(0,\lambda^0)$, thanks to formulas (\ref{ConservationNRJHalf}) which guarantees that $\mathcal{R}^N(\cdot,\cdot)$, $\mathcal{R}^D(\cdot,\cdot)$ are always located on the unit circle, we are sure that for $\eps$ small enough, there is $\lambda^{\eps}$ such that $\mathcal{R}^N(\eps,\lambda^{\eps})=-\mathcal{R}^D(\eps,\lambda^\eps)$. In this case, from (\ref{HalfSumDif}), we deduce that $\mathcal{R}(\eps,\lambda^{\eps})=0$. Analogously, we can get $\mathcal{T}(\eps,\lambda^{\eps})=0$ when $\mathcal{R}^N(0,\lambda^0)\ne\mathcal{R}^D(0,\lambda^0)$. We summarize these results in the following theorem.

\begin{theorem}\label{thmRTNull}
Assume that the profile function $H$ used to define the geometry in (\ref{DefGeom}) satisfies the condition (\ref{AssumptionCoeffs}).\\[3pt]
$\star$ Assume that $\mathcal{R}^N(0,\lambda^0)\ne-\mathcal{R}^D(0,\lambda^0)$. Then for all $\eps\ne0$ small enough, there is $\lambda^{\eps}$ with $|\lambda^{\eps}-(\lambda^0+\eps\ell_m(H))|=O(\eps)$ such that $\mathcal{R}(\eps,\lambda^{\eps})=0$.\\[3pt]
$\star$ Assume that $\mathcal{R}^N(0,\lambda^0)\ne \mathcal{R}^D(0,\lambda^0)$. Then for all $\eps\ne0$ small enough, there is $\lambda^{\eps}$ with $|\lambda^{\eps}-(\lambda^0+\eps\ell_m(H))|=O(\eps^2)$ such that $\mathcal{T}(\eps,\lambda^{\eps})=0$.
\end{theorem}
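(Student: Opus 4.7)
The plan is to combine the Fano resonance expansion of the Neumann half-waveguide coefficient $\mathcal{R}^N$ obtained from Theorem \ref{MainThmAsympto} with the smooth behaviour of the Dirichlet half-waveguide coefficient $\mathcal{R}^D$ near $(0,\lambda^0)$, and then to exploit the decomposition (\ref{HalfSumDif}) together with the unit-circle constraint (\ref{ConservationNRJHalf}) to reduce both statements to a single real phase equation that can be solved by an intermediate-value/winding argument on $\mathscr{S}$.

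First I would verify that the trapped mode $u_{\tr}$, once restricted to $\Om_+$, is a trapped mode for the Neumann half-waveguide problem $(\mathscr{P}^N)$ but \emph{not} for the Dirichlet one $(\mathscr{P}^D)$, since by hypothesis $\partial_x u_{\tr}=0$ and $u_{\tr}\not\equiv 0$ on $\Upsilon$. Consequently Theorem \ref{MainThmAsympto} and the monomode identity (\ref{eqn1DAnalysisBis}) apply to $\mathcal{R}^N$, giving, for $\lambda^\eps=\lambda^0+\eps\ell_m(H)+\eps^2\mu$,
\[
\mathcal{R}^N(\eps,\lambda^\eps)=\mathcal{R}^N(0,\lambda^0)\,\frac{2i\tilde\mu+|\mathbb{S}'_{\bullet m}|^2}{2i\tilde\mu-|\mathbb{S}'_{\bullet m}|^2}+O(\eps),
\]
the error being uniform in $\mu$ on compact sets and, after a careful inspection of the weighted-space estimates underlying Proposition \ref{PropositionAsymptoExpan}, extending to the full Fano window $\mu\in[-c\eps^{-1},c\eps^{-1}]$. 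On the other hand, since $(\mathscr{P}^D)$ has no trapped mode at $\lambda^0$, standard perturbation arguments make $(\eps,\lambda)\mapsto\mathcal{R}^D(\eps,\lambda)$ jointly smooth near $(0,\lambda^0)$, so $\mathcal{R}^D(\eps,\lambda^\eps)=\mathcal{R}^D(0,\lambda^0)+O(\eps)$ uniformly in that window.

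In view of (\ref{HalfSumDif}) and (\ref{ConservationNRJHalf}), the conditions $\mathcal{R}(\eps,\lambda^\eps)=0$ and $\mathcal{T}(\eps,\lambda^\eps)=0$ amount respectively to $\mathcal{R}^N=-\mathcal{R}^D$ and $\mathcal{R}^N=\mathcal{R}^D$ on $\mathscr{S}$, i.e.\ to a single real equation on the phase of $\mathcal{R}^N/\mathcal{R}^D$. As $\tilde\mu$ ranges over $\mathbb{R}$ the M\"{o}bius factor above sweeps $\mathscr{S}\setminus\{1\}$ bijectively (it is essentially the map $g$ of (\ref{Resultat1DMobius})), so the limit value of $\mathcal{R}^N(\eps,\lambda^\eps)$ covers $\mathscr{S}\setminus\{\mathcal{R}^N(0,\lambda^0)\}$. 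For the perfect-reflection statement, the hypothesis $\mathcal{R}^N(0,\lambda^0)\ne\mathcal{R}^D(0,\lambda^0)$ places the target $\mathcal{R}^D(0,\lambda^0)$ strictly inside this swept set, so there is a unique $\mu_0\in\mathbb{R}$ attaining it; a local implicit-function argument on the phase equation at $\mu_0$ yields $\mu^\eps$ close to the bounded value $\mu_0$, whence the sharp $O(\eps^2)$ control on $|\lambda^\eps-(\lambda^0+\eps\ell_m(H))|$. The non-reflection statement follows identically with the target $-\mathcal{R}^D(0,\lambda^0)$ under the hypothesis $\mathcal{R}^N(0,\lambda^0)\ne-\mathcal{R}^D(0,\lambda^0)$, the looser $O(\eps)$ bound being sufficient there.

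The main technical obstacle is the uniformity in $\mu$ of the Fano expansion of $\mathcal{R}^N$: Proposition \ref{PropositionAsymptoExpan} is stated with error bounds over a compact $\lambda'$-interval, whereas to guarantee the full sweep of the unit circle we need $|\mu|$ possibly of order $\eps^{-1}$. This uniformity must be traced back to the construction of $Z_m^\eps$ in weighted spaces with detached asymptotics, and combined with the identity $|\mathcal{R}^N(\eps,\cdot)|=1$, which forces the $O(\eps)$ remainder to be tangential to $\mathscr{S}$ and therefore incapable of preventing the phase from completing its prescribed winding. Once this uniformity is secured, the intermediate-value/winding argument producing $\mu^\eps$, together with the local inversion near $\mu_0$ that sharpens the bound in the perfect-reflection case, is routine.
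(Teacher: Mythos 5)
Your proposal is correct and follows essentially the same route as the paper: $\mathcal{R}^D$ stays within $O(\eps)$ of $\mathcal{R}^D(0,\lambda^0)$ because $(\mathscr{P}^D)$ supports no trapped mode at $\lambda^0$, while by (\ref{eqn1DAnalysisBis}) the coefficient $\mathcal{R}^N$ sweeps the unit circle over the window $\mu\in[-c\eps^{-1};c\eps^{-1}]$, and the exact unit-circle constraint (\ref{ConservationNRJHalf}) plus an intermediate-value argument on the phase difference yields the crossing $\mathcal{R}^N=\mp\mathcal{R}^D$, with the $O(\eps^2)$ localisation in the perfect-reflection case coming, as you say, from the target being attained at a bounded $\mu_0$. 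The only small imprecision is that checking $u_{\tr}$ itself is not a Dirichlet trapped mode is not quite the same as excluding all trapped modes of $(\mathscr{P}^D)$ at $\lambda^0$; the latter is what the smoothness of $\mathcal{R}^D$ requires, and it follows from the simplicity of $\lambda^0$ assumed in Section \ref{SectionSetting} (an odd reflection of a Dirichlet trapped mode would give a second, skew-symmetric eigenfunction in $\Om$).
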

\begin{remark}
When $\mathcal{R}^N(0,\lambda^0)=-\mathcal{R}^D(0,\lambda^0)$ (resp. $\mathcal{R}^N(0,\lambda^0)=\mathcal{R}^D(0,\lambda^0)$), non reflection (resp. perfect reflection) occurs in $\Om$ for $\lambda=\lambda_0$. However the above analysis does not give a straightforward proof of existence of a pair $(\eps,\lambda^\eps)\ne(0,\lambda_0)$ such that $\mathcal{R}(\eps,\lambda^{\eps})=0$ or $\mathcal{T}(\eps,\lambda^{\eps})=0$.

\end{remark}

\section{Numerical results}\label{SectionNumerics}

In this section, we illustrate the results obtained above. We shall work in two different geometries.

\subsection{$L$-shaped geometry}

For a given $\lambda^0\in(0;\pi^2)$, set $k^0:=\sqrt{\lambda^0}$ and consider the problem $(\mathscr{P}^{N})$ defined in (\ref{PbRTNullHalfWg}) in the half-waveguide $\Om_+=\Om_+(L)$ with 
\begin{equation}\label{GuideEnL}
\Om_+(L)=(0;+\infty)\times(0;1)\cup(0;\pi/k^0)\times(0;L)
\end{equation}
(see Figure \ref{DomainNum} left). In \cite[Thm. 3.1]{ChPaSu}, it is shown that there is an unbounded sequence $(L_n)_n$ such that trapped modes exist in the geometry (\ref{GuideEnL}) for $L=L_n$. Moreover, we have $\lim_{n\to+\infty}|L_{n+1}-L_n|=\pi/k^0$. We assume that $L$ is chosen once for all equal to one of these $L_n$. \\
\newline
We perturb the geometry $\Om_+$ into $\Om^{\eps}_+:=\Om_+(L+\eps)$. Admittedly, this kind of perturbation is not exactly the one considered in \S\ref{paragraphSetting} (see formula (\ref{DefGeom})). However, since there exists an almost identical mapping from $\Om_+$ to $\Om^{\eps}_+$, results are similar. In the following we compute the scattering coefficient $\mathcal{R}^N(\eps,\lambda)$ appearing in (\ref{decompoHalfN}) for several values of $\eps$ and $\lambda\in(0;\pi^2)$. To proceed, we use a $\mrm{P2}$ finite element method in a truncated geometry. On the artificial boundary created by the truncation, a Dirichlet-to-Neumann operator with $20$ terms serves as a transparent condition. 

\begin{figure}[!ht]
\centering
\begin{tikzpicture}[scale=1.4]
\draw[fill=gray!30,draw=none](0,0) rectangle (3,1);
\draw[fill=gray!30,draw=none](0,0) rectangle (1.25,2.5524);
\draw (3,0)--(0,0)--(0,2.5524)--(1.25,2.5524)--(1.25,1)--(3,1); 
\draw[dashed] (3,1)--(3.5,1); 
\draw[dashed] (3,0)--(3.5,0);
\draw[dotted,>-<] (-0.05,2.7)--(1.3,2.7);
\draw[dotted,>-<] (-0.2,-0.05)--(-0.2,2.6);
\node at (0.65,2.85){\small $\pi/k^0$};
\node at (-0.4,1.28){\small $L$};
\end{tikzpicture}\qquad\qquad\includegraphics[width=0.37\textwidth]{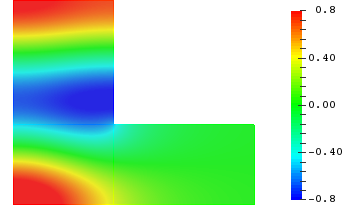}
\caption{Left: geometry of $\Om_+(L)$. Right: real part of a trapped mode appearing for $k^0=\sqrt{\lambda^0}=0.8\pi$ and $L\approx2.5524$.\label{DomainNum}} 
\end{figure}

\noindent For the experiments, we take $k^0=\sqrt{\lambda^0}= 0.8\pi$ so that the width of the vertical branch is equal to $1.25$. Figure \ref{DomainNum} right represents the real part of a trapped mode which appears for $L\approx2.5524$. For the justification of existence of this trapped mode, again we refer the reader to \cite[Thm. 3.1]{ChPaSu}. Since $|\mathcal{R}^N(\eps,\lambda)|=1$ due to conservation of energy, we can write $\mathcal{R}^N(\eps,\lambda)=e^{i\theta^N(\eps,\lambda)}$ for some phase function $\theta^N(\cdot,\cdot)$ valued in $[-\pi;\pi)$. In Figure \ref{PhaseL}, we represent the curves $\{\theta^N(\eps,k^2),\,k\in(0;\pi)\}$ for several values of $\eps$. For $\eps\ne0$, we observe a quick variation of the phase for $k$ varying in a neighbourhood of $k^0$. The variation gets even quicker as $\eps$ becomes small. Moreover, we see that the reflection coefficient $\mathcal{R}^N(\eps,\lambda)$ moves on the unit circle counter-clockwise as $\lambda$ increases. These results are coherent with what has been obtained in Section \ref{SectionPerturbed}, especially with formula  (\ref{eqn1DAnalysisBis}).

\begin{figure}[!ht]
\centering
\includegraphics[width=0.47\textwidth]{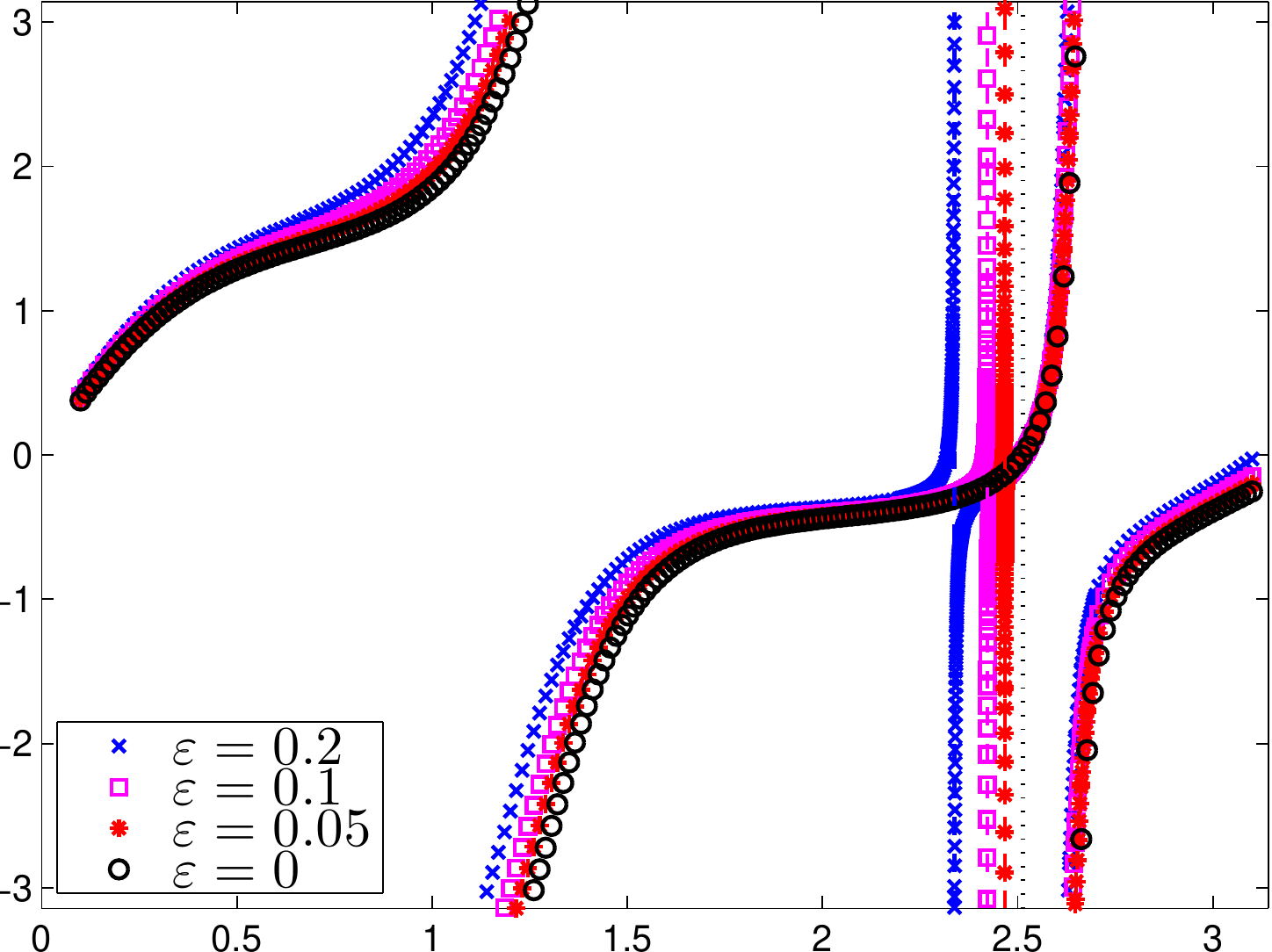}\quad\includegraphics[width=0.47\textwidth]{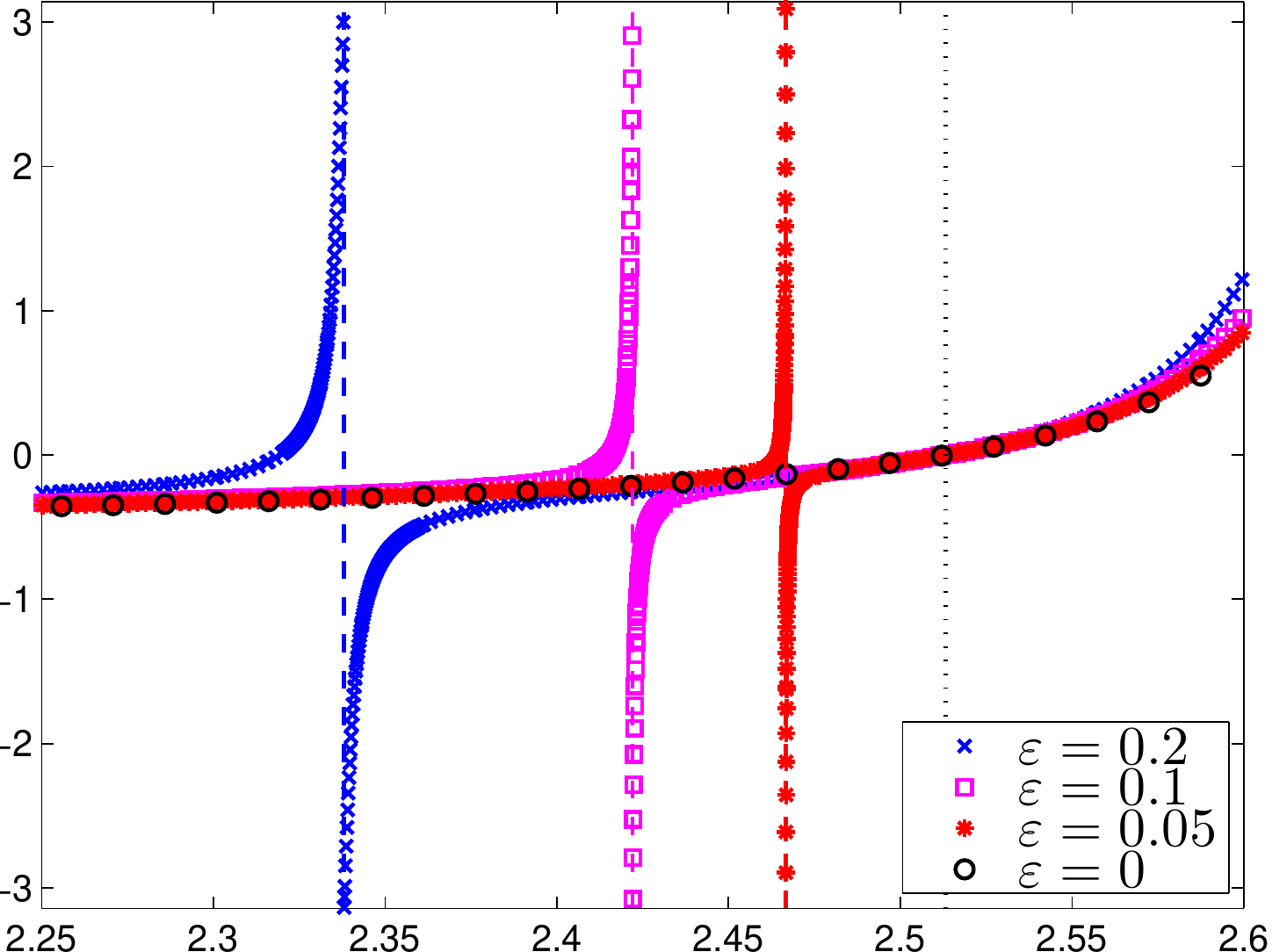}
\caption{Maps $k\mapsto\theta^{N}(\eps,k^2)$ for several values of $\eps$ and $k\in(0;\pi)$. The right picture is a zoom on the left picture around $k^0=\sqrt{\lambda^0}=0.8\pi$ (this value is marked by the vertical black dotted line). The vertical coloured dashed lines indicate the values of $k$ such that $\theta^{N}(\eps,k^2)=-\pi$.  \label{PhaseL}}
\end{figure}

\noindent In the next series of experiments, we work in the geometry $\Om^{\eps}$ obtained from $\Om^{\eps}_+$ by symmetrization: $\Om^{\eps}=\{(\pm x,y)\in\R^2\,|\,(x,y)\in\Om^{\eps}_+\}\cup\Upsilon$ (see the discussion just before (\ref{PbRTNull})). In particular $\Om^{\eps}$ is unbounded both in left and right directions. We take $\eps\ne0$ small (for the experiments, $\eps=0.05$) and we compute the scattering coefficients $\mathcal{R}(\eps,\lambda)$, $\mathcal{T}(\eps,\lambda)$ introduced in (\ref{decomScattering}) for a range of $\lambda$ in a neighbourhood of $\lambda^0$. Our goal is to exhibit situations where $\mathcal{R}(\eps,\lambda)=0$, $|\mathcal{T}(\eps,\lambda)|=1$ (non reflection) or $|\mathcal{R}(\eps,\lambda)|=1$, $\mathcal{T}(\eps,\lambda)=0$ (perfect reflection). To proceed, we select the values of $\lambda$ corresponding to peaks in the curves $\lambda\mapsto-\ln|\mathcal{R}(\eps,\lambda)|$, $\lambda\mapsto-\ln|\mathcal{T}(\eps,\lambda)|$. In Figure \ref{LRNull}, we display the total field $u^{\eps}$ defined in (\ref{decomScattering}) and the scattered field $u^{\eps}-w_0^{-}$ in a situation where $\mathcal{R}(\eps,\lambda)=0$, $|\mathcal{T}(\eps,\lambda)|=1$ (non reflection). We observe that the scattered field is indeed null as $x\to+\infty$. Surprisingly, it seems that the scattered field is also null, or at least small, as $x\to-\infty$. This indicates that not only we have $|\mathcal{T}(\eps,\lambda)|=1$ but also $\mathcal{T}(\eps,\lambda)\approx1$. In Figure \ref{LTNull}, we represent the total field $u^{\eps}$ in a setting where $|\mathcal{R}(\eps,\lambda)|=1$, $\mathcal{T}(\eps,\lambda)=0$ (perfect reflection). As expected the total field is null as $x\to-\infty$. As a consequence, the energy is completely backscattered, this is the mirror effect. It seems that $u^{\eps}$ is purely imaginary as $x\to+\infty$. This means that $\mathcal{R}(\eps,\lambda)=-1$ (or at least $\mathcal{R}(\eps,\lambda)\approx-1$).

\begin{figure}[!ht]
\centering
\includegraphics[width=0.47\textwidth]{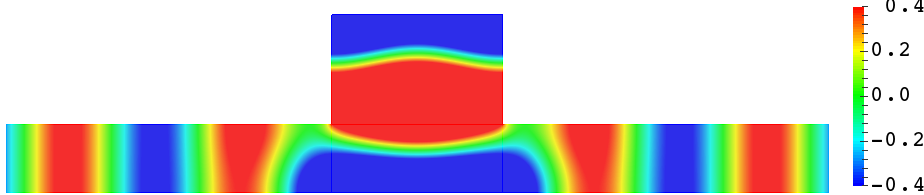}\quad\includegraphics[width=0.47\textwidth]{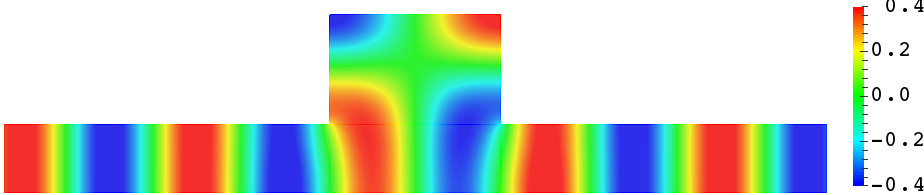}\\[5pt]
\includegraphics[width=0.47\textwidth]{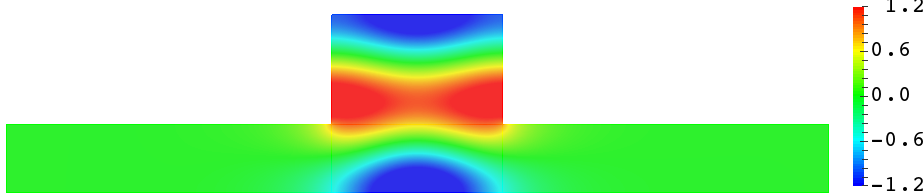}\quad\includegraphics[width=0.47\textwidth]{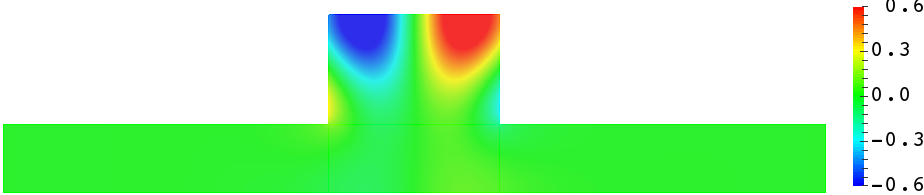}
\caption{Top: real (left) and imaginary (right) parts of the total field $u^{\eps}$. Bottom: real (left) and imaginary (right) parts of the scattered field $u^{\eps}-w_0^{-}$. Here $\eps=0.05$ and $k=\sqrt{\lambda}=2.46402$. The parameter $\lambda$ has been tuned so that $\mathcal{R}(\eps,\lambda)=0$, $|\mathcal{T}(\eps,\lambda)|=1$ (non reflection). As expected the scattered field is null as $x\to+\infty$.\label{LRNull}}
\end{figure}
\begin{figure}[!ht]
\centering
\includegraphics[width=0.47\textwidth]{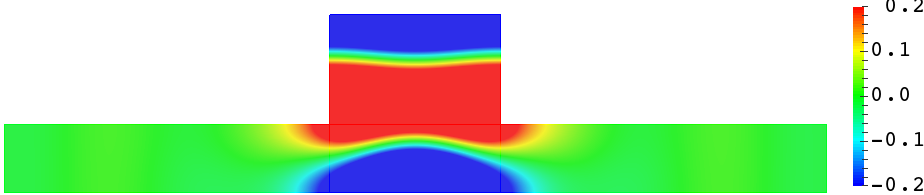}\quad\includegraphics[width=0.47\textwidth]{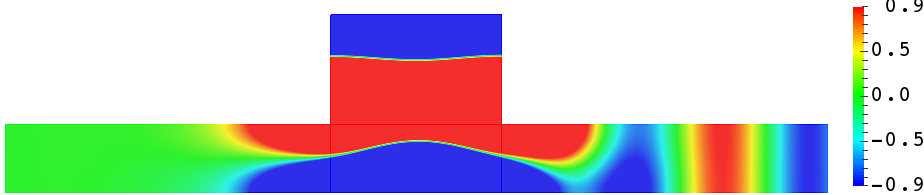}
\caption{Real (left) and imaginary (right) parts of the total field $u^{\eps}$ for $\eps=0.05$ and $k=\sqrt{\lambda}=2.4666602$. The frequency has been tuned so that $|\mathcal{R}(\eps,\lambda)|=1$, $\mathcal{T}(\eps,\lambda)=0$ (perfect reflection). As expected the total field is null as $x\to-\infty$. \label{LTNull} }
\end{figure}

\subsection{Geometry with non penetrable inclusions}

In this paragraph, we make computations in another geometry with non penetrable inclusions. This time, we set 
\[
\Om_+^{\eps}:=(0;+\infty)\times(0;1)\setminus\overline{B((1,0.5+\eps),0.25)}
\]
where $B((1,0.5+\eps),0.25)$ is the open ball centered at $(1,0.5+\eps)$ of radius $0.25$ (see Figure \ref{GeomDisk} left). Note that for $\eps=0$, the half-waveguide is symmetric with respect to the axis $\R\times\{0.5\}$. Classical results (see e.g. \cite{EvLV94}) guarantee the existence of trapped modes in this geometry when $\eps=0$ for certain frequencies. Numerically, we find that they appear for $k^0=\sqrt{\lambda^0}\approx2.7403$. One of theses trapped modes is displayed in Figure \ref{GeomDisk} right. 

\begin{figure}[!ht]
\centering
\begin{tikzpicture}[scale=1.6]
\draw[fill=gray!30,draw=none](0,0) rectangle (3,1);
\draw (3,0)--(0,0)--(0,1)--(3,1); 
\draw[dashed] (3,1)--(3.5,1); 
\draw[dashed] (3,0)--(3.5,0);
\draw[fill=white] (1,0.6) circle (0.25cm);
\draw[dotted,>-<] (1,-0.05)--(1,0.65);
\draw[dotted,>-<] (-0.05,0.6)--(1.05,0.6);
\node at (1.5,0.3){\small $0.5+\eps$};
\node at (0.5,0.7){\small $1$};
\end{tikzpicture}\qquad\raisebox{0.1cm}{\includegraphics[width=0.58\textwidth]{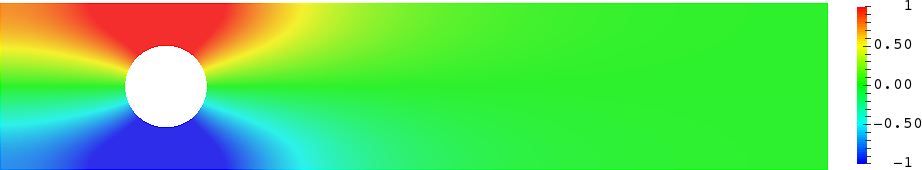}}
\caption{Left: geometry of $\Om_+^{\eps}$. Right: real part of a trapped mode for $\eps=0$ and $k^0:=\sqrt{\lambda^0}\approx2.7403$.\label{GeomDisk}} 
\end{figure}

\noindent We perform the same experiments as in the previous paragraph. In particular, Figures \ref{PhaseDisk}, \ref{CercleRNull}, \ref{CercleTNull} below are respectively the analogous of Figures \ref{PhaseL}, \ref{LRNull}, \ref{LTNull} in the new geometry. Again, we obtain results which are coherent with the theory developed in the previous sections.

\begin{figure}[!ht]
\centering
\includegraphics[width=0.47\textwidth]{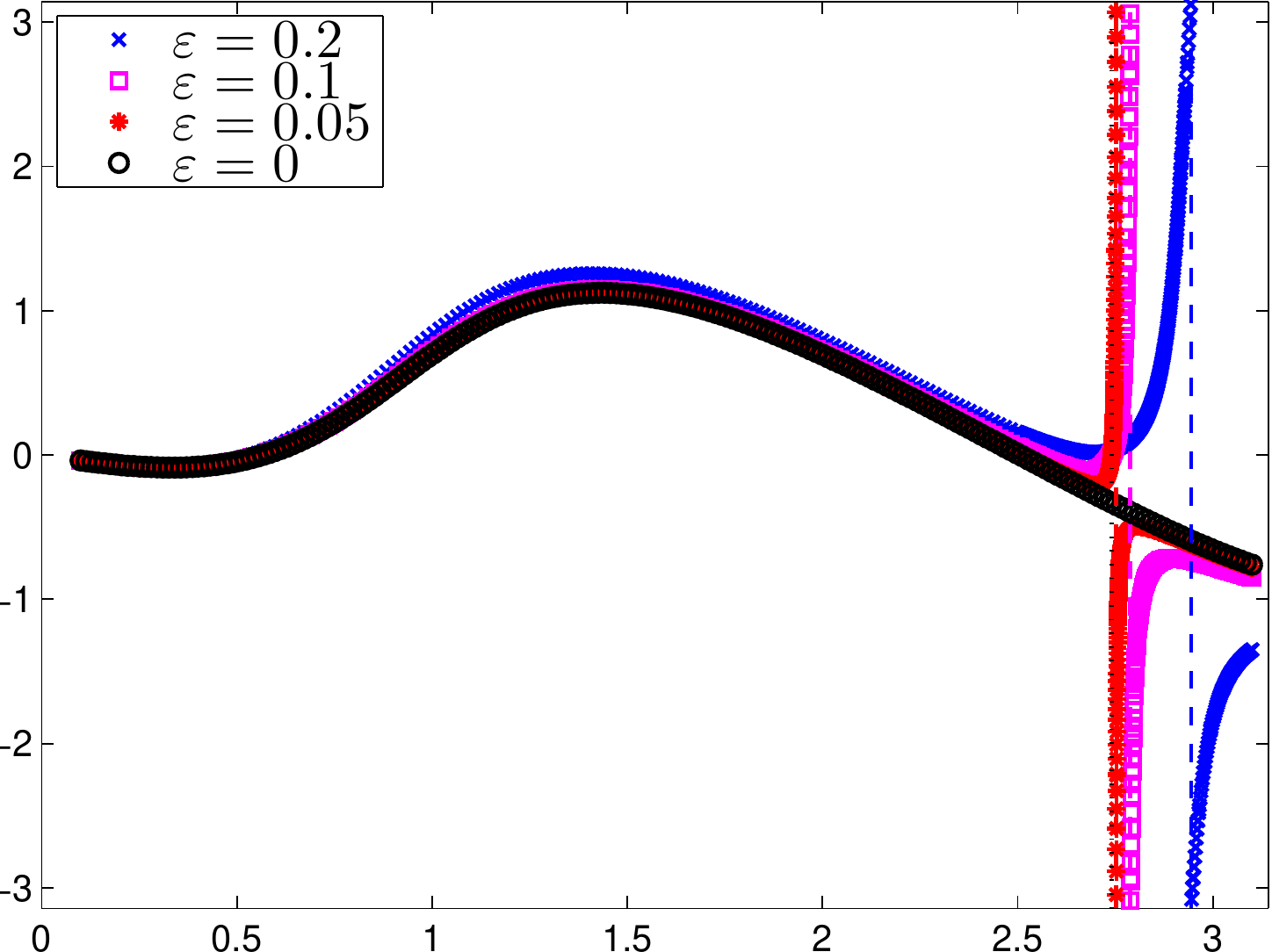}\quad\includegraphics[width=0.47\textwidth]{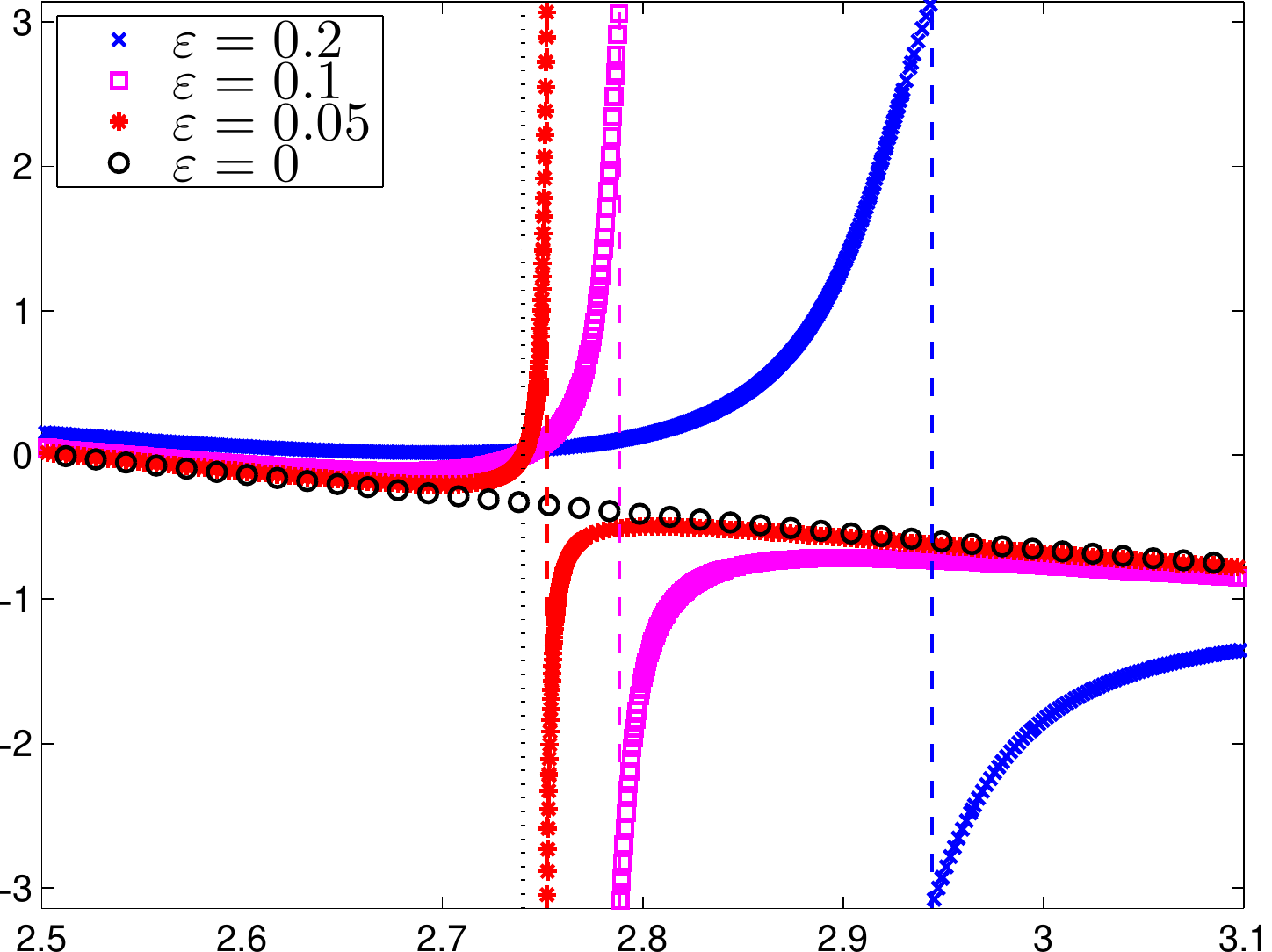}
\caption{Maps $k\mapsto\theta^{N}(\eps,k^2)$ for several values of $\eps$ and $k\in(0;\pi)$. The right picture is a zoom on the left picture around $k^0=\sqrt{\lambda^0}=2.7403$ (this value is marked by the vertical black dotted line). The vertical coloured dashed lines indicate the values of $k$ such that $\theta^{N}(\eps,k^2)=-\pi$.\label{PhaseDisk}}
\end{figure}

\begin{figure}[!ht]
\centering
\includegraphics[width=0.47\textwidth]{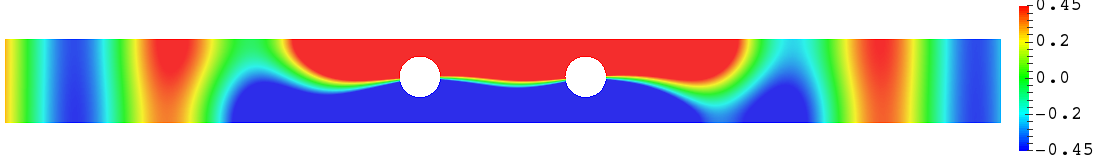}\quad\includegraphics[width=0.47\textwidth]{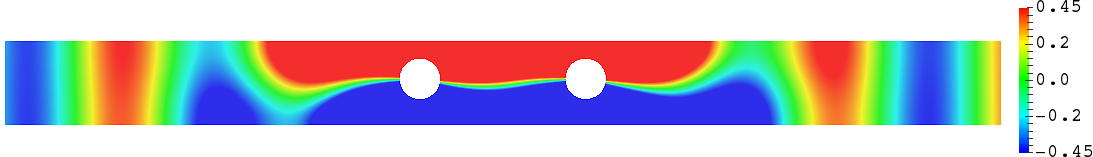}\\[5pt]
\includegraphics[width=0.47\textwidth]{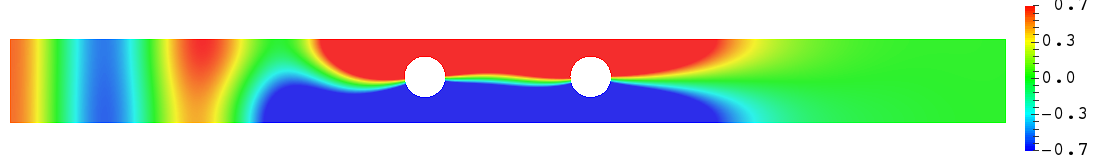}\quad\includegraphics[width=0.47\textwidth]{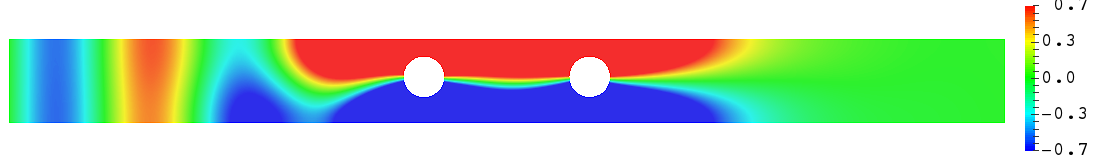}
\caption{Top: real (left) and imaginary (right) parts of the total field $u^{\eps}$. Bottom: real (left) and imaginary (right) parts of the scattered field $u^{\eps}-w_0^{-}$. Here $\eps=0.05$ and $k=\sqrt{\lambda}=2.751$. The parameter $\lambda$ has been tuned so that $\mathcal{R}(\eps,\lambda)=0$, $|\mathcal{T}(\eps,\lambda)|=1$ (non reflection). As expected the scattered field is null as $x\to+\infty$. \label{CercleRNull}}
\end{figure}
\begin{figure}[!ht]
\centering
\includegraphics[width=0.47\textwidth]{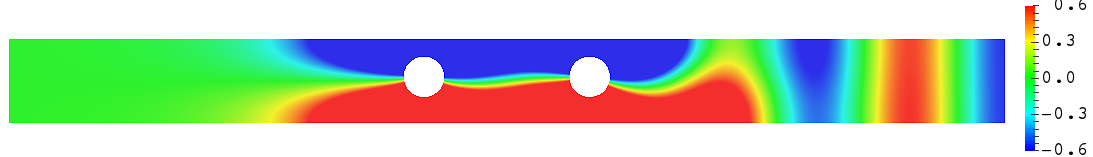}\quad\includegraphics[width=0.47\textwidth]{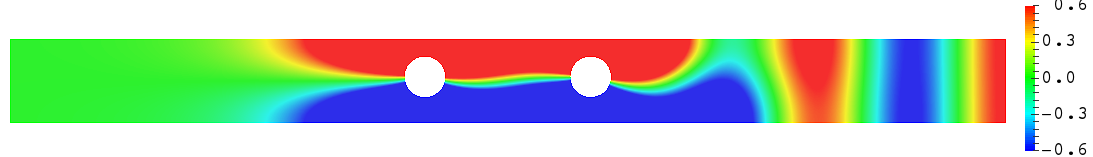}
\caption{Real (left) and imaginary (right) parts of the total field $u^{\eps}$ for $\eps=0.05$ and $k=\sqrt{\lambda}=2.75495$. The frequency has been tuned so that $|\mathcal{R}(\eps,\lambda)|=1$, $\mathcal{T}(\eps,\lambda)=0$ (perfect reflection). As expected the total field is null as $x\to-\infty$.\label{CercleTNull}}
\end{figure}

\newpage

\section{Conclusion}
In this article, we proved rigorously the Fano resonance phenomenon in a $\mrm{2D}$ waveguide with homogeneous Neumann boundary conditions. Then in monomode regime, we used this Fano resonance phenomenon together with symmetry considerations to provide examples of geometries where the energy of an incident wave propagating through the structure is completely transmitted ($\rcoef=0$) or completely reflected ($\tcoef=0$). Everything presented here can be adapted in higher dimension, that is for waveguides of $\R^d$, $d\ge3$, with a bounded transverse section. Moreover we could also have studied analogously problems with homogeneous Dirichlet boundary conditions to deal with quantum waveguides. We considered a geometrical perturbation of the walls of the waveguide. We could also have worked with a penetrable inclusion placed in the waveguide. Then perturbing the material parameter, we would have obtained similar results. The explanation of the Fano resonance phenomenon is valid at any frequency in any waveguide. However the results of existence of geometries where non reflection or perfect reflection occurs works only at low frequency, $0<k<\pi$ in our geometry so that only the piston modes can propagate, and in symmetric domains. It seems rather intricate to extent our approach at higher frequency when more modes can propagate and to remove the symmetry assumption. It would also be interested to investigate the Fano resonance phenomenon around an eigenvalue of higher multiplicity (in this work we assume that the eigenvalue is simple).

\section*{Acknowledgments}
The research of S.A. N. was supported by the grant No. 17-11-01003 of the Russian Science Foundation.

\bibliography{Bibli}
\bibliographystyle{plain}

\end{document}